\ifdef{\ebook}{
  \usepackage[paperwidth=9cm, paperheight=12cm, hmargin={0.17in, 0.17in}, vmargin={0.50in, 0.17in}]{geometry} \usepackage{kmath,kerkis}
}{
\usepackage[verbose,tmargin=1in,lmargin=1.35in,rmargin=1.35in]{geometry}
}
\ifdef{\ebook}{\usepackage{breqn}}{}
\newtheorem{theorem}{Theorem}[section]
\newtheorem*{ttheorem}{Tian's criterion}
\newtheorem{lemma}[theorem]{Lemma}
\newtheorem{proposition}[theorem]{Propositio}
\newtheorem*{conjecture}{Conjecture}
\theoremstyle{remark}
\newtheorem*{remark}{Remark}
\newtheorem{maintheorem}[theorem]{Theorem}
\theoremstyle{definition}
\newtheorem{definition}[theorem]{Definition}
\newtheorem{example}[theorem]{Example}
\newcommand{\pt}{\{\mathbf{pt}\}}
\renewcommand{\Vert}{\mathcal{V}}
\newcommand{\Hor}{\mathcal{H}}
\newcommand{\QGIT}{{\!/\!\!/}}
\newcommand{\tXn}{{\widetilde{X}^\circ}}
\newcommand{\Xn}{{X^\circ}}
\newcommand{\tX}{{\widetilde{X}}}
\newcommand{\tD}{{\widetilde{D}}}
\newcommand{\T}{{\mathcal{T}}}
\newcommand{\tY}{{\widetilde{Y}}}
\newcommand{\CC}{\mathbf{C}}
\renewcommand{\L}{\mathcal{L}}
\newcommand{\B}{\mathcal{B}}
\newcommand{\QQ}{\mathbf{Q}}
\newcommand{\ZZ}{\mathbf{Z}}
\newcommand{\PP}{\mathbf{P}}
\newcommand{\A}{\mathbf{A}}
\newcommand{\CO}{{\mathcal{O}}}
\newcommand{\X}{{\mathcal{X}}}
\newcommand{\sufficient}{valuable }
\DeclareMathOperator{\Sl}{Sl}
\DeclareMathOperator{\Bl}{Bl}
\DeclareMathOperator{\im}{im}
\DeclareMathOperator{\Hom}{Hom}
\DeclareMathOperator{\divisor}{div}
\DeclareMathOperator{\wdiv}{Div}
\DeclareMathOperator{\spec}{Spec}
\DeclareMathOperator{\supp}{supp}
\DeclareMathOperator{\Aut}{Aut}
\DeclareMathOperator{\lct}{g\bf{lct}}
\title{K\"ahler-Einstein metrics on symmetric Fano T-varieties}
\author[H.~S\"u\ss{}]{Hendrik S\"u\ss{}}
\address{Departement of Higher Algebra, Faculty of Mechanics and Mathematics, Moscow State University, Leninskie Gory 1, GSP-1, Moscow, 119991, Russia}
\email{suess@sdf-eu.org}
\urladdr{http://suess.sdf-eu.org/}
\thanks{I was supported by a fellowship of the Alexander von Humboldt Foundation}
\subjclass[2010]{32Q20 (Primary) 14L30, 14J45 (Secondary)}
\keywords{Kahler-Einstein metric, T-variety, torus action, Fano variety, log-canonical threshold}
\begin{document}
\maketitle
\begin{abstract}
  We relate the global log canonical threshold of a variety with torus action to the global log canonical threshold of its quotient. We apply this to certain Fano varieties and use Tian's criterion to prove the existence of K\"ahler-Einstein metrics on them. In particular, we obtain simple examples of Fano threefolds being K\"ahler-Einstein but admitting deformations without K\"ahler-Einstein metric. 
\end{abstract}

\section{Introduction}
This paper deals with the question of the existence of K\"ahler-Einstein metrics on Fano manifolds. While for the case of negative Ricci curvature and for the Ricci flat case the existence of K\"ahler-Einstein metrics is known for some time due to famous results of Aubin and Yau, the Fano case is still not completely understood. There are some known obstructions such as the reductivity of the automorphism group and the vanishing of the Futaki character \cite{0506.53030}. There is also a sufficient criterion due to Tian \cite{0599.53046}, but for a long time there was no complete algebraic characterization of the K\"ahler-Einstein property, only recently it was proved that a certain notion of stability plays this r\^ole \cite{2012arXiv1210.7494C,2012arXiv1211.4669T,2013arXiv1302.0282C}. And even this stability condition is hard to check in general.

However, for toric varieties the problem is completely solved. By a result of Wang and Zhu a toric Fano variety is K\"ahler-Einstein if and only if the Futaki character vanishes \cite{1086.53067}. Moreover, the Futaki character can be easily calculated as the barycenter of the polytope corresponding to the toric Fano manifold \cite{0661.53032}. These positive results suggest to also exploit lower dimensional torus actions for approaching the problem. 
Since Wang's and Zhu's result seems not to generalize well for non-toric situations, we follow the idea of an older paper by Batyrev and Selivanova \cite{0939.32016}. They proved the existence of K\"ahler-Einstein metrics on \emph{symmetric} toric Fano manifolds. It turns out that this result generalizes well. More precisely, high symmetry of the Fano manifold allows us to approach the question by solving a related problem on a torus quotient.

One motivation for considering lower dimensional torus actions in this context is the wish to study the behavior of the K\"ahler-Einstein property under deformations of the manifold, similar to the famous example of deformations of the Mukai-Umemura threefold studied in detail by Tian \cite{0892.53027} and Donaldson \cite{1161.53066}. Since toric Fano manifolds are rigid \cite{0910.14004}, we cannot hope for such an example in the toric world.

Hence, we consider an effective action of an algebraic torus $T=(\CC^*)^m$ on a normal algebraic variety $X$, possibly of higher dimension than $T$. The union of the maximal orbits will be denoted by $\Xn$. We also consider some rational quotient map $\pi: X \dashrightarrow Y$. The dimension of $Y$ equals the \emph{complexity} of the torus action which is defined as the difference of dimensions $\dim X -\dim T$. Let $\mathcal{N}(T)$ be the normalizer of the acting torus inside $\Aut(X)$. Then one obtains an action of $\mathcal{N}(T)$ on $T$ by conjugation, which descends to an action on the characters  $M=\Hom(T,\CC^*)$. 
Assume that $G$ is a finite subgroup of $\mathcal{N}(T)$. This induces a $G$-action on $M$. Following \cite{0939.32016} the variety $X$ is called \emph{symmetric} with respect to the $T$-action if the identity is the only fixed point of such a $G$-action on $M$ for some finite subgroup $G \subset \mathcal{N}(T)$.

An equivariant automorphism $\varphi \in \mathcal{N}(T)$ descends to an automorphism $\underline{\varphi}$ of $\im(\pi)$, via $\underline{\varphi}(y):=\pi(\varphi(x))$ where $x$ is an arbitrary element in the preimage of $y$. 
Hence, we also obtain an action of $G$ on $\im(\pi)$, which is assumed to extend to the whole quotient $Y$. A fiber of $\pi$ is called \emph{non-reduced}, if $T$ acts with disconnected stabilizer on this fiber (indeed the fibers \emph{are} non-reduced in the algebro-geometric sense).

Our first result is the following criterion for the existence of K\"ahler-Einstein metrics on $T$-varieties of complexity one.
\begin{maintheorem}
\label{sec:thm-cplx-1}
  Let $X$ be a symmetric log terminal Fano $T$-variety of complexity one. If one of the following conditions is fulfilled:
  \begin{enumerate}
  \item there are three non-reduced fibers,
  \item there are two non-reduced fibers which are swapped by an element of $G$,
  \item $G$ acts fixed-point-free on $Y=\PP^1$,
  \end{enumerate}
then $X$ is K\"ahler-Einstein.
\end{maintheorem}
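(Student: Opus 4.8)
The plan is to establish the hypothesis of Tian's criterion in its $G$-equivariant form, namely to prove that the $G$-invariant global log canonical threshold satisfies $\lct_G(X) > \frac{n}{n+1}$, where $n = \dim X$; by the equivariant version of Tian's result this produces a $G$-invariant K\"ahler-Einstein metric on $X$. Since invariant anticanonical divisors form a subset of all anticanonical divisors, one has $\lct_G(X) \ge \lct(X)$, so it suffices in each case to bound the appropriate one of these two thresholds from below. The route to such a bound is to transport the computation to the quotient by means of the comparison result relating the global log canonical thresholds of $X$ and its quotient.

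Because the complexity equals one, the rational quotient $Y$ is a curve, and as $X$ is Fano this curve is $\PP^1$; the log terminal hypothesis on $X$ makes the induced boundary pair $(\PP^1, \Delta)$ a klt log Fano pair, whose coefficients record the fiber structure of $\pi$, with the non-reduced fibers appearing as its distinguished points. The comparison theorem identifies $\lct_G(X)$ with the $G$-invariant log canonical threshold of $(\PP^1, \Delta)$, so the problem becomes one-dimensional: bound below the invariant lct of a klt pair on $\PP^1$ under the induced $G$-action.

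The decisive feature is how the non-reduced fibers and the $G$-action constrain singular invariant divisors on $\PP^1$. In case (i) the three non-reduced fibers already rigidify the pair on $\PP^1$, since three distinguished points admit no positive-dimensional family of automorphisms, and I expect the corresponding boundary to force the log canonical threshold above $\frac{n}{n+1}$ on its own. In cases (ii) and (iii) the $G$-action carries the argument, the key point being that a $G$-invariant effective $\QQ$-divisor on $\PP^1$ cannot concentrate its mass at a single point unless that point is fixed by all of $G$; invariance therefore spreads the divisor over a full $G$-orbit, and a spread-out divisor has larger log canonical threshold than a concentrated one. In case (iii) the absence of a global fixed point rules out concentration at any point, while in case (ii) the element of $G$ swapping the two non-reduced fibers shares the relevant multiplicity between them. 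In each case the resulting threshold exceeds $\frac{n}{n+1}$, and Tian's criterion applies.

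The step I expect to be the main obstacle is the precise bookkeeping inside the comparison formula, in particular disentangling the horizontal contributions, coming from multisections of $\pi$, from the vertical contributions, coming from the fibers, where the non-reduced fibers carry fractional coefficients. The invariant lct is governed by the worst invariant anticanonical divisor, and one must check carefully that the symmetry genuinely excludes the concentrated configurations that would otherwise drive the threshold below $\frac{n}{n+1}$; controlling the interplay between the $G$-action on $\PP^1$ and the precise location of the non-reduced fibers is the delicate part.
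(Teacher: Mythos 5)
Your proposal is correct and follows essentially the paper's own route: Theorem~\ref{sec:thm-main} reduces everything to the bound $\lct_G(\PP^1,B)\geq 1$ for $B=\sum_P\frac{m_P-1}{m_P}\cdot P$, which the paper verifies by exactly the orbit-spreading argument you describe (as in Example~\ref{sec:exmp-proj-line}), and Tian's criterion then applies since $\min\{1,\lct_G(\PP^1,B)\}=1>\frac{n}{n+1}$. One small repair: in case (i) the mechanism is not rigidity of three marked points but pure degree bookkeeping --- three boundary coefficients $\geq\nicefrac{1}{2}$ force $\deg(-K_{\PP^1}-B)\leq\nicefrac{1}{2}$, so even a divisor fully concentrated at a boundary point $P$ gives coefficient $\frac{m_P-1}{m_P}+\deg D\leq 1$, with no symmetry needed; moreover, your anticipated difficulty with horizontal contributions does not arise, as that bookkeeping is internal to the already-established Theorem~\ref{sec:thm-main} rather than part of this proof.
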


This result is used to prove the existence of a K\"ahler-Einstein metric on a certain blowup of the quadric threefold in two conics and on the hypersurface $V(xu^2+yv^2+zw^2) \subset \PP^2\times\PP^2$ of bidegree $(1,2)$. These give examples in the flavor of the Mukai-Umemura threefold \cite{0526.14006}, i.e. inside a family of Fano threefolds there is an element of high symmetry admitting a K\"ahler-Einstein metric and there are nearby elements with and without such a metric, see Section~\ref{sec:examples}.

The main tools for proving Theorem~\ref{sec:thm-cplx-1} are global log canonical thresholds (glct) of varieties and pairs, which are the algebro-geometric counterparts of Tian's $\alpha$-invariants. We are using the following theorem by Tian \cite[Theorem~2.1]{0599.53046}.  
\begin{ttheorem}
  Let $X$ be a smooth (or orbifold) Fano variety and $G \subset \Aut(X)$ a finite symmetry group. If we have the bound \[\lct_G(X) > \frac{\dim(X)}{\dim(X)+1},\]  for the global log canonical threshold, then $X$ is K\"ahler-Einstein.
\end{ttheorem}

For a symmetric T-variety the global log canonical thresholds on $X$ can be calculated as global log canonical thresholds of a pair on the quotient variety. \\For a prime divisor $Z$ on $Y$ we consider its preimage $\pi^{-1}(Z)$. Here, the generic stabilizer on a component of $\pi^{-1}(Z)$  will be a finite abelian group $\ZZ/m\ZZ$. The maximal order obtained on the components is denoted by $m_Z$. This gives rise to a boundary divisor $B=\sum_Z \frac{m_Z-1}{m_Z} \cdot Z$ on $Y$. 

\begin{maintheorem}
\label{sec:thm-main}
  If $X$ is a symmetric log terminal Fano T-variety 
and $\pi|_{\Xn}$ is a morphism onto $Y$, then we have
\[\lct_G(X) = \min\{1,\lct_G(Y, B)\}.\]
In particular, $\lct_G(X) \geq 1$ holds if and only if $\lct_G(Y, B)\geq 1$ holds.
\end{maintheorem}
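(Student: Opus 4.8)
The plan is to reduce the computation of $\lct_G(X)$, which ranges over all $G$-invariant effective $\QQ$-divisors $D \sim_\QQ -K_X$, to a computation over $T$-invariant divisors, and then to transport such divisors across $\pi$ to the quotient pair $(Y,B)$. First I would exploit the symmetry hypothesis to show that the infimum defining $\lct_G(X)$ is unchanged when restricted to the subfamily of $T$-invariant divisors. Given a $G$-invariant $D \sim_\QQ -K_X$, I decompose a defining section into $T$-weight components and pass to the initial degeneration along a one-parameter subgroup $\lambda\colon \CC^* \to T$; the limit $D_0 = \lim_{t\to 0}\lambda(t)\cdot D$ is $\lambda$-invariant, lies again in $|{-}K_X|_\QQ$, and satisfies $\operatorname{lct}(X,D_0) \le \operatorname{lct}(X,D)$ since singularities only worsen under degeneration. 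The role of the symmetry condition — that $G$ has no nonzero fixed character in $M$ — is to guarantee both that this degeneration can be carried out $G$-equivariantly and, via the resulting weight barycenter being the unique $G$-fixed point $0 \in M \otimes \QQ$, that one may push all the way to a genuinely $T$-invariant (and still $G$-invariant) divisor without leaving the anticanonical class. This is the $T$-variety analogue of the Batyrev–Selivanova/Futaki argument, and I expect it to be the main obstacle, since it requires controlling the class and the $G$-action in the limit simultaneously.

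Second, I would set up the dictionary between $T$-invariant divisors upstairs and divisors on $Y$. A $T$-invariant prime divisor is either \emph{vertical} (a component of some $\pi^{-1}(Z)$) or \emph{horizontal} (dominating $Y$). Applying the ramification formula for the quotient morphism $\pi|_{\Xn}$, the finite cyclic generic stabilizer $\ZZ/m_Z\ZZ$ over a prime divisor $Z \subset Y$ forces $\pi^*Z = m_Z \cdot Z'$ on the relevant component, so that Riemann–Hurwitz in the fibre direction produces exactly the adjunction
\[ K_X \sim_\QQ \pi^*(K_Y + B) + (\text{horizontal anticanonical part}), \]
with $B = \sum_Z \tfrac{m_Z-1}{m_Z}Z$ the stated boundary. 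A $T$-invariant $D \sim_\QQ -K_X$ thus corresponds to a $G$-invariant divisor $D_Y \sim_\QQ -(K_Y+B)$ together with a horizontal remainder supported on the torus-invariant prime divisors of the generic fibre, and this correspondence is a bijection on the relevant $G$-invariant families.

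Third, I would compare the log canonical thresholds fibrewise. Over the locus where $\pi$ is a smooth torus quotient, $(X,D)$ is log canonical near a fibre if and only if $(Y, B + D_Y)$ is, because log discrepancies are preserved by the structure that is étale in the torus direction once the branch correction $B$ has been incorporated; the non-reduced fibre structure is thereby fully absorbed into $B$ via the orders $m_Z$. The horizontal remainder contributes a separate constraint coming from the transversal sections of $\pi$, whose worst log canonical threshold equals $1$ and caps the value. This should yield the per-divisor identity $\operatorname{lct}(X,D) = \min\{1,\ \operatorname{lct}((Y,B),D_Y)\}$. Taking the infimum over $G$-invariant $T$-invariant $D$ — which by the first step computes $\lct_G(X)$, and by the dictionary ranges over $G$-invariant $D_Y \sim_\QQ -(K_Y+B)$ — then gives $\lct_G(X) = \min\{1,\lct_G(Y,B)\}$, using that the infimum commutes with $\min\{1,\cdot\}$. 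The log terminal and Fano hypotheses ensure that $(Y,B)$ is a klt log Fano pair, so all thresholds are positive and the equivalence $\lct_G(X)\ge 1 \Leftrightarrow \lct_G(Y,B)\ge 1$ follows at once.
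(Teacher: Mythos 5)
Your architecture tracks the paper's (reduce to invariant divisors, classify them via the quotient, compare discrepancies, cap the threshold at $1$ by horizontal data), but the proposal has two genuine gaps. The first is that you invoke the symmetry hypothesis in the wrong step. In the degeneration to a $T$-invariant divisor no symmetry is needed: $\lim_{t\to 0}\gamma(t).D \sim D$ holds automatically, semicontinuity of log canonicity under such degenerations (Demailly--Koll\'ar) gives $\operatorname{lct}(X,D_0)\leq \operatorname{lct}(X,D)$, and $G$-invariance is recovered simply by averaging $\nicefrac{1}{|G|}\cdot (G.D_0)$, which remains $T$-invariant because $G$ normalizes $T$; your ``weight barycenter at the unique $G$-fixed point'' plays no role there. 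Where symmetry is indispensable is precisely your second step, and there you never use it: without it the claimed bijection is false, because a $(T\times G)$-invariant divisor in $|-K_X|_\QQ$ can differ from $\pi^*Q_Y+\sum_{D\in\Hor}D+\sum_{D\in\Vert}(1-\mu(D))\cdot D$ by $\divisor(f)$ for a $T$-semi-invariant function $f$ of nonzero weight $u\in M$. It is exactly the hypothesis that $0$ is the only $G$-fixed character that forces $u=0$, hence $f\in K(X)^T=\pi^*K(Y)$ and $\divisor(f)=\pi^*\divisor(g)$; this is the content of Lemma~\ref{sec:lemma-invariant-canonicals}. Relatedly, $\pi^*Z=\sum_{D\in\Vert_Z}\mu(D)\cdot D$ may have several components of different orders, so your Riemann--Hurwitz identity $K_X\sim_\QQ \pi^*(K_Y+B)+(\text{horizontal part})$ is really only the inequality $\pi^*B\geq B_X$, with equality along the components of maximal order $\mu(D)=m_Z$; the paper needs the inequality for one implication and the existence of maximal-order components for the converse, and your ``fully absorbed into $B$'' glosses over this.

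The second gap is that the fibrewise comparison and the cap at $1$ are asserted rather than proved. Log canonicity must be tested on a resolution, including divisors exceptional over $Y$ and the locus outside $\Xn$; the paper's essential device --- the toroidal resolution $\varphi:\tX\to X$ lifting a log resolution $\psi:\tY\to Y$ of $(Y,\supp_Y D)$, together with the lemma that a locally formally toric resolution suffices to test log canonicity --- is missing from your sketch, and ``log discrepancies are preserved by the structure \'etale in the torus direction'' does not account for the exceptional divisors of $\psi$ nor for the horizontal strata. Worse, your cap at $1$ appeals to horizontal divisors of $X$, but these can fail to exist: the paper's own main example, the hypersurface of bidegree $(1,2)$, has $\Hor=\emptyset$. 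The paper instead produces a horizontal divisor on $\tX$ (the general fiber of $\tX\to\tY$ is a complete toric variety, so it carries invariant divisors) and uses log terminality of $X$ to see that the discrepancy there drops below $-1$ exactly when $\lambda>1$; without some such argument the inequality $\lct_G(X)\leq 1$ --- half of the stated equality --- has no proof in your proposal.
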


  If $Y$ has only orbifold singularities, then the pair $(Y,B)$ can be interpreted as a kind of orbifold quotient of $X$. Moreover, the above result somehow motivates the following conjecture.
\begin{conjecture}
Assume that $X$ fulfills the preconditions of Theorem~\ref{sec:thm-main} and the quotient $(Y,B)$ admits an (orbifold) K\"ahler-Einstein metric, then $X$ is K\"ahler-Einstein.
\end{conjecture}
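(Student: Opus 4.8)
The plan is to construct the K\"ahler-Einstein metric on $X$ directly, by lifting the given orbifold K\"ahler-Einstein metric from the quotient pair $(Y,B)$ along the rational quotient map $\pi$, exploiting that the generic fibres of $\pi|_{\Xn}$ are orbits of $T=(\CC^*)^m$ and hence carry a canonical flat K\"ahler structure. Since $X$ is symmetric, I would first reduce to the invariant setting: by reductivity of $\Aut(X)$ together with the symmetry hypothesis (which forces vanishing of the Futaki character, removing the natural obstruction), it suffices to produce an $(S^1)^m$-invariant, indeed $G$-invariant, K\"ahler-Einstein metric, and such invariant metrics are governed by data on the quotient. Concretely, I would seek $\omega_X$ of the form $\pi^*\omega_{(Y,B)}$ corrected by a fibrewise translation-invariant form $\omega_{\mathrm{fib}}$ on the torus fibres, the latter encoded (in the spirit of the polyhedral/symplectic description of $T$-varieties) by a symplectic potential that is linear in the moment-map directions, so that the only remaining unknown is a potential on the base.

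The verification that this ansatz is Einstein would proceed through the canonical bundle formula for the torus fibration. Since the dualizing sheaf of a torus is trivial, adjunction for $\pi$ gives $K_X \sim_{\QQ} \pi^*(K_Y+B)$ on $\Xn$, with the boundary $B=\sum_Z \tfrac{m_Z-1}{m_Z}\,Z$ recording exactly the ramification of $\pi$ along the non-reduced fibres, where $T$ acts with generic stabilizer $\ZZ/m_Z\ZZ$; this is the direct analogue of Kodaira's formula for multiple fibres. Feeding this into the computation of $\mathrm{Ric}(\omega_X)$, the flat fibres contribute no Ricci curvature, and the horizontal contribution is $\mathrm{Ric}(\omega_{(Y,B)})$ read upstairs. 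The hypothesis $\mathrm{Ric}(\omega_{(Y,B)})=\omega_{(Y,B)}+[B]$, with $[B]$ the current of integration along $B$, should then, after pullback, match the term produced by the boundary in the canonical bundle formula, yielding $\mathrm{Ric}(\omega_X)=\omega_X$.

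The heart of the matter, and the main obstacle, is the behaviour along the preimages of $B$. Downstairs the orbifold metric has cone singularities of angle $2\pi/m_Z$ transverse to each component of $B$; upstairs $\pi^{-1}(Z)$ is a non-reduced fibre on which $T$ acts with the cyclic stabilizer $\ZZ/m_Z\ZZ$. The assertion is that the cone singularity is exactly undone by the ramified structure of the fibration, so that the pulled-back metric extends smoothly, as an orbifold metric, across $\pi^{-1}(Z)$ --- the total-space analogue of the elementary fact that a cone metric of angle $2\pi/m$ on $\CC$ pulls back under $z\mapsto z^m$ to a smooth metric. Establishing this is a local analytic computation near each special fibre, and it is precisely where the coefficient $\tfrac{m_Z-1}{m_Z}$ in $B$ is forced.

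A further difficulty, which is why the statement is posed as a conjecture rather than proved, appears already for complexity greater than one: the curvature of the connection implicit in writing $X$ as a torus fibration over $Y$ contributes cross-terms to $\mathrm{Ric}(\omega_X)$ that do not obviously vanish. One must either arrange the ansatz so that the fibration is sufficiently flat for these terms to drop out, or absorb them into a modification of the base equation; controlling them uniformly, alongside the smoothness across the multiple fibres, is the analytic core of the problem. As an alternative, more algebraic route consistent with the rest of the paper, one could instead try to deduce K-polystability of $X$ from that of $(Y,B)$ by showing that every $T$-equivariant test configuration of $X$ descends to a test configuration of the pair with matching Donaldson-Futaki invariant, and then invoke that $G$-equivariant K-polystability suffices for the symmetric $X$.
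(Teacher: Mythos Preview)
The statement you are addressing is labelled \emph{Conjecture} in the paper, and the paper does not attempt to prove it; it is offered only as motivation following Theorem~\ref{sec:thm-main}, with a remark noting that the converse fails already for the quadric threefold. There is therefore no proof in the paper to compare your proposal against.

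Your write-up is not a proof but a strategy sketch, and you say as much yourself when you identify the curvature cross-terms and the extension across multiple fibres as unresolved analytic difficulties. As an outline of why the conjecture is plausible it is reasonable, and the canonical-bundle heuristic you invoke is indeed what underlies the paper's Proposition~\ref{sec:K-formula}. Two technical caveats are worth flagging. First, your claimed identity $K_X \sim_{\QQ} \pi^*(K_Y+B)$ on $\Xn$ is not quite what Proposition~\ref{sec:K-formula} gives: the formula there involves the individual orders $\mu(D)$, whereas $B$ records only the maximal order $m_Z$ over each $Z$, so the pullback $\pi^*B$ matches $\sum_{D\in\Vert}(\mu(D)-1)D$ only when every component of a fibre has the same generic stabilizer; in general there is a discrepancy supported on the ``non-maximal'' vertical components. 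Second, your ansatz lives on $\Xn$, but the complement $X\setminus\Xn$ consists of the horizontal divisors (lower-dimensional orbit strata), and extending the metric smoothly across these is a separate issue from the cone-angle matching along multiple fibres; the paper's proof of Theorem~\ref{sec:thm-main} has to treat the horizontal divisors by a distinct argument, and any genuine proof of the conjecture would have to as well.

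In short: there is no gap relative to the paper because the paper proves nothing here; your proposal is an honest heuristic, correctly self-identified as incomplete.
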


\begin{remark}
  The converse of the conjecture is not true as the quadric threefold shows. The quotient is $(\PP^1, \nicefrac{1}{2}\cdot\pt)$ which is not K\"ahler-Einstein by \cite{ross-thomas}. 
\end{remark}

 $G$ will be called \emph{\sufficient}for a pair $(X,B)$ if $\lct_G(X,B) \geq 1$. In particular, Tian's criterion states the existence of a K\"ahler-Einstein metric for Fano varieties admitting a \sufficient subgroup of symmetries. By Theorem~\ref{sec:thm-main} This property is preserved by our quotient maps.

\begin{remark}
  Also for another reason it makes sense to look for K\"ahler-Einstein metrics on symmetric Fano varieties. Since the Futaki character corresponds to an element of $M$ which is fixed by torus equivariant automorphisms, it has to vanish on a symmetric manifold. Hence, Futaki's necessary criterion is automatically fulfilled.
\end{remark}

\subsection*{Acknowledgment} I would like to thank Carl Tipler and J\"urgen Hausen for answering patiently all my questions on deformations of K\"ahler-Einstein metrics and GIT limits respectively. Moreover, I have to thank Nathan Ilten for sharing his knowledge on deformations of Fano varieties and for providing the excellent {Macaulay2} package \emph{VersalDeformations} \cite{versalDeform}.

\section{Log canonical thresholds}
\label{sec:lct}
First, we shortly remind the notion of log pairs and some of their properties. A log pair $(Y,B)$, consists of a normal variety $Y$ and a $\QQ$-divisor $B$, which is called \emph{boundary}. A canonical divisor of such a pair is simply the sum $K_X + B$ of a canonical divisor $K_Y$ on $Y$ and the boundary. Hence, we call a pair $\QQ$-Gorenstein if $K_Y + B$ is $\QQ$-Cartier. A log pair is called \emph{smooth}, if $Y$ is smooth and the boundary is a simply normal crossing divisor. A \emph{log resolution} of $(Y,B)$ is a birational proper morphism $\varphi: \widetilde{Y} \rightarrow Y$ such that $(\widetilde{Y},\varphi^*B)$ is smooth. A pair is called \emph{orbifold}, if $Y$ has only quotient singularities and $B$ is of the form $\sum_{i=1}^\ell \frac{m_i-1}{m_i}\cdot Z_i$, with positive integers $m_i$.

\begin{definition} Consider a Gorenstein pair $(Y,B)$ and a log resolution $\varphi:\widetilde{Y} \rightarrow Y$. Then there are canonical divisors $K_{\widetilde{Y}}$, $K_Y$ on $\widetilde{Y}$ and $Y$, respectively, such that $E:=K_{\widetilde{Y}} - \varphi^*(K_Y + B)$, is supported only on the strict transform of $\supp B$ and the exceptional divisor. The pair is called \emph{log canonical (l.c.)} if all the coefficients of $E$ are at least $-1$.

The coefficients of $E$ are called \emph{discrepancies} of the morphism $\varphi$ with respect to the pair $(Y,B)$.
\end{definition}

Let us denote the set of effective $\QQ$-divisors being linearly equivalent to a divisor $D$ by $|D|_{\QQ}$ and by $|D|_\QQ^G$ the set of those divisors which are also $G$-invariant. Having this we are going to introduce the central notion of this article.
\begin{definition}
\label{sec:def-lct}
  For a $\QQ$-Gorenstein pair $(Y,B)$ and a finite subgroup $G \subset \Aut(Y,B)$  one defines its \emph{global log canonical threshold} by
\ifdef{\ebook}{
  \begin{dmath*}[breakdepth={0},compact]
    \lct_G(Y,B) = \sup\left\{\lambda \;|\; (Y, B  + \lambda D)  \text{ is l.c. for all } D \in  |-K_X-B|_\QQ^G \right\}.
  \end{dmath*}
}{
\[ \lct_G(Y,B) = \sup\left\{\lambda \;|\; (Y, B  + \lambda D)  \text{ is l.c. for all } D \in  |-K_X-B|_\QQ^G \right\}.\]
}
\end{definition}

\begin{remark}
 Since \cite{0994.32021} appeared it is known, that global log canonical thresholds coincide with Tian's $\alpha$-invariant from \cite{0599.53046}. For a proof see Demailly's appendix of \cite{1167.14024}.
\end{remark}

Now, we will slightly generalize the notion of a pair and of a \sufficient subgroup of symmetries by allowing also $-\infty$ as a coefficient for boundary divisors. In this case we denote the open subset $Y \setminus \{-\infty \text{-locus of } B \} \subset Y$ by $Y^\circ$.
\begin{definition}
  A finite subgroup $G \subset \Aut(Y,B)$ is called \emph{\sufficient}for $(Y,B)$ if $(Y, D)$ is log canonical for all G-invariant $\QQ$-divisors $D$, with $D \sim_\QQ -K_Y$ and $D \geq B$.
\end{definition}

\begin{remark}
  For $B$ finite this definition simply means that $\lct_G(Y,B) \geq 1$, i.e. the definition coincides with the original one from the introduction.

  The notion of $(-\infty)$-coefficients looks a bit artificial. To illustrate the meaning we consider a $G$-invariant blowup $Y' \rightarrow Y$ with exceptional divisor $E$.  Now, $G$ is \sufficient for $(Y',-\infty \cdot E)$ if and only if the same is true for $Y$. 
\end{remark}

The notion of $(-\infty)$-coefficients allows us to generalize Theorem~\ref{sec:thm-main} to the case that $\pi|_{\Xn}$ is not a morphism onto $Y$.
\begin{theorem}
\label{sec:thm-sufficient}
  $G$ is {\sufficient} for a symmetric T-variety $X$ if and only if the same is true for the quotient $(Y,B)$.
\end{theorem}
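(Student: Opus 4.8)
The plan is to reduce the statement to Theorem~\ref{sec:thm-main}, whose hypothesis that $\pi|_{\Xn}$ be a morphism is the only ingredient missing here, by resolving the indeterminacy of $\pi$ and then discarding the resulting exceptional divisors through the $(-\infty)$-coefficient formalism. So the two halves of the argument are a resolution step that changes $X$ into a variety on which the quotient is a morphism, and an application of the already-established morphism case to that new variety.

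First I would pass to the closure $\Gamma$ of the graph of $\pi$ inside $X \times Y$ and take a $T$- and $G$-equivariant resolution $\phi\colon \tX \to X$ dominating $\Gamma$, so that the induced rational map lifts to an honest morphism $\tilde\pi\colon \tX \to Y$ onto $Y$. The torus and the finite group both lift, so $\tX$ is again a symmetric log terminal $T$-variety, albeit no longer Fano; the $\phi$-exceptional locus is a $G$-invariant divisor $E$ whose image $\phi(E) \subset X$, the indeterminacy locus of $\pi$, has codimension at least two. Applying the Remark on $(-\infty)$-coefficients at each step of an equivariant factorization of $\phi$ into blow-ups, with the roles of $Y$ and $Y'$ played successively by $X$ and its modifications, then shows that $G$ is valuable for $X$ if and only if $G$ is valuable for $(\tX, -\infty\cdot E)$.

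Next I would run the divisor correspondence underlying Theorem~\ref{sec:thm-main} for the morphism $\tilde\pi$, but for the pair $(\tX, -\infty\cdot E)$ in place of $(\tX, 0)$. That correspondence matches $G$-invariant anticanonical $\QQ$-divisors on the total space with $G$-invariant $\QQ$-divisors $D_Y \sim_\QQ -K_Y - B$ on the base and identifies log canonicity upstairs with log canonicity of $(Y, B + D_Y)$; I expect its proof to use only that $\tilde\pi$ is a morphism together with the symmetry of $X$, and not the Fano condition, so that it survives the passage to $\tX$. Restricting everything to $\tX^\circ = \tX \setminus \supp E$ converts the boundary $-\infty\cdot E$ into $(-\infty)$-coefficients on the base along $\tilde\pi(E)$, so that $(\tX, -\infty\cdot E)$ is matched with $(Y, B)$, where $B$ now carries $(-\infty)$-coefficients precisely over the locus above which $\pi$ failed to be defined. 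Combining this with the previous equivalence yields that $G$ is valuable for $X$ if and only if it is valuable for $(Y,B)$.

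The main obstacle will be the bookkeeping of $E$ under $\tilde\pi$: its components split into vertical ones, contracted to proper subvarieties of $Y$, which are what produce the $(-\infty)$-coefficients of $B$ and pin down the indeterminacy image, and possibly horizontal ones dominating $Y$, which must be shown either not to arise from the equivariant resolution or to leave the base boundary unchanged once the $(-\infty)$-locus is excised. Establishing that the boundary produced by the extended correspondence is exactly the $B$ of the statement — that resolving $\pi$ does not perturb the stabilizer data computed on the well-defined part and merely appends $(-\infty)$ over the indeterminacy image — together with checking that Theorem~\ref{sec:thm-main}'s correspondence indeed only relied on $\tilde\pi$ being a morphism, is where the real content of the proof lies.
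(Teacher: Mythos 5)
Your overall strategy---resolve the indeterminacy, move the problem to the pair $(\tX,-\infty\cdot E)$, and then invoke the morphism case underlying Theorem~\ref{sec:thm-main}---is genuinely different from the paper's argument and could in principle be completed, but as written it has two real defects. First, your transfer step ``$G$ is \sufficient for $X$ iff for $(\tX,-\infty\cdot E)$'' is justified by iterating the Remark along ``an equivariant factorization of $\phi$ into blow-ups.'' No such factorization is available: $X$ is only log terminal, weak factorization applies to smooth varieties and produces blow-downs as well as blow-ups, and the Remark (itself stated in the paper without proof) covers a single blowup with boundary $-\infty\cdot E$, not the decorated intermediate models you would accumulate. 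The step is repairable, but only by a direct argument: since $Q_X\sim_\QQ -K_X$, the divisor $K_X+Q_X=\nicefrac{1}{m}\cdot\divisor(f)$ is $\QQ$-Cartier, so setting $K_\tX+Q_\tX=\phi^*(K_X+Q_X)$ gives a crepant correspondence, and one checks that a $\phi$-exceptional $\QQ$-principal divisor over the complete normal $X$ must vanish. Second, your last paragraph concedes that ``the real content''---the boundary-decorated version of Lemma~\ref{sec:lemma-invariant-canonicals} and of the discrepancy comparison for the morphism $\widetilde{\pi}$---is left unverified; your expectations there are in fact correct (Fano and log terminality enter the proof of Theorem~\ref{sec:thm-main} only through the bound $\lct_G(X)\leq 1$, which is irrelevant at $\lambda=1$, and symmetry is a property of $(T,G)$ acting on $M$, inherited by $\tX$), but they are asserted, not proved. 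A smaller correction: the $-\infty$-coefficients of $B$ sit at the prime divisors $Z$ with $\Vert_Z=\emptyset$, i.e.\ divisors of $Y$ (exceptional for the modification $Y\rightarrow X\QGIT T$) dominated by no divisor of $X$; this is not literally ``the locus above which $\pi$ failed to be defined,'' although your mechanism---over such $Z$ every vertical divisor of $\tX$ is $\phi$-exceptional, so effectivity imposes no constraint there---is the right one.

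The paper avoids this entire superstructure with one observation your proposal misses: although pullback of divisors is not functorial through rational maps, pullback of rational functions is, so $\QQ$-linearly trivial divisors do pull back functorially. By Lemma~\ref{sec:lemma-invariant-canonicals}, symmetry gives $K_X+Q_X=\pi^*(K_Y+Q_Y)\sim_\QQ 0$, whence on the toroidal resolution of Proposition~\ref{sec:prop-toriodal-res} one has $\varphi^*(K_X+Q_X)=\widetilde{\pi}^*\psi^*(K_Y+Q_Y)$, and the pullback formula~(\ref{eq:1}) turns the discrepancy comparison into a one-line computation; no $-\infty\cdot E$ ever appears upstairs, the $-\infty$-coefficients being already encoded in the effectivity criterion $Q_Y\geq B$ of Lemma~\ref{sec:lemma-invariant-canonicals}. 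So your reduction scheme is sound in outline but broken at the factorization step and defers the central verification; the missing idea that makes the theorem a half-page computation is precisely the functoriality of pullback for the $\QQ$-trivial divisor $K_X+Q_X$---the same $\QQ$-triviality you would need to repair your own first step.
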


\begin{example}
\label{sec:exmp-proj-line}
  Consider $X=\PP^1$ and $B=\nicefrac{1}{2}\cdot \{0\}+ \nicefrac{1}{2}\cdot \{\infty\}$. The effective $\QQ$-divisors $D \sim_\QQ -(K_{\PP^1}+B)$ are exactly those of degree $1$. Hence, the maximal possible coefficient of $(B + \lambda D)$ is $(\nicefrac{1}{2}+\lambda)$ and the pair $(\PP^1,B + \lambda D)$ is log canonical if and only if $\lambda \leq \nicefrac{1}{2}$. One obtains $\lct_{\langle 1 \rangle}(\PP^1,B)=\nicefrac{1}{2}$.  

Let $G$ be the group consisting of the identity and the involution given by $x \mapsto \nicefrac{1}{x}$. Then $G$ has exactly two fixed points: $1$ and $-1$. All other orbits consist of two elements. Let's assume that $D$ is $G$-invariant and chosen in such a way, that the biggest coefficient is maximal. Then $D$ might be either concentrated at one of the fixed points or it might be equally distributed over $x$ and $\nicefrac{1}{x}$, for some point $x \in \PP^1$. In the first case the maximal exponent of $(B + \lambda D)$ is $\max\{\nicefrac{1}{2},\lambda\}$ and in the second case it is $\nicefrac{\lambda}{2}+ \nicefrac{1}{2}$. In both cases $(\PP^1,B + \lambda D)$ is log canonical if and only if $\lambda \leq 1$. Hence, one obtains $\lct_G(\PP^1,B)=1$.
\end{example}

\begin{proposition}
  For a $T$-variety the global log canonical threshold can be calculated by considering only $T$-invariant divisors $D$.
\end{proposition}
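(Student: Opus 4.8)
The plan is to replace an arbitrary test divisor by a $T$-invariant one by degenerating it along a one-parameter subgroup of $T$. Writing $c(D):=\sup\{\lambda\mid (X,B+\lambda D)\text{ is log canonical}\}$, the assertion is that $\lct_G(X,B)=\inf_D c(D)$ is unchanged when the infimum, a priori over all $G$-invariant $D\in|-K_X-B|_\QQ$, is restricted to those $D$ that are in addition $T$-invariant. One inequality is free: restricting the infimum to a smaller family of divisors only increases it. The content is therefore to show that every $G$-invariant $D$ admits a $T$- (and $G$-)invariant companion $D'\sim_\QQ -K_X-B$ with $c(D')\le c(D)$.

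The degeneration producing a $T$-invariant divisor goes as follows. Fix $r$ with $-r(K_X+B)$ Cartier, put $L=\CO_X(-r(K_X+B))$ with its natural $T$-linearization, and write $rD=\divisor(s)$ for a section $s\in H^0(X,L)$. Decomposing $s=\sum_\chi s_\chi$ along the weight decomposition $H^0(X,L)=\bigoplus_\chi H^0(X,L)_\chi$, a one-parameter subgroup $\nu\colon\CC^*\to T$ acts by $\nu(t)\cdot s=\sum_\chi t^{\langle\nu,\chi\rangle}s_\chi$, so the flat limit $D_0:=\lim_{t\to 0}\nu(t)\cdot D$ exists and equals $\tfrac1r\divisor(s_{\chi_0})$, where $\chi_0$ minimizes $\langle\nu,\chi\rangle$ over the weights occurring in $s$; for generic $\nu$ this minimizer is unique, so $s_{\chi_0}$ is a $T$-semiinvariant section and $D_0$ is $T$-invariant. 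Since $B$ is $T$-invariant, each $\nu(t)$ is an automorphism of the pair, so $c(\nu(t)\cdot D)=c(D)$ for $t\neq0$; by lower semicontinuity of the log canonical threshold in the flat family $\{\nu(t)\cdot D\}$ the special member satisfies $c(D_0)\le c(D)$. This already settles the case of trivial $G$.

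The difficulty, and the main obstacle, is that $D_0$ need not be $G$-invariant: a generic $\nu$ is not $G$-stable, and the $G$-average $\tfrac1{|G|}\sum_g g\cdot D_0$ is by convexity of the log canonical condition \emph{less} singular than $D_0$, so symmetrizing the limit moves the threshold the wrong way. To circumvent this I would pass to the valuative description $\lct_G(X,B)=\inf_v A_{X,B}(v)\big/\sup_D\ord_v(D)$, where $v$ runs over divisorial valuations, $A_{X,B}(v)$ is the log discrepancy, and the inner supremum is over $G$-invariant $D$. The key gain is that $\ord_v(\cdot)$ is \emph{additive} in the divisor, unlike $c(\cdot)$: for a $T$-invariant $v$ one has $\ord_v(D)=\min_\chi\ord_v(D_\chi)$ with $D_\chi:=\tfrac1r\divisor(s_\chi)$ the $T$-invariant weight pieces, and if $v$ is moreover $G$-invariant then $G$-averaging a piece $D_{\chi_0}$ realizing the minimum yields a $G$- and $T$-invariant divisor $D'$ with $\ord_v(D')=\ord_v(D)$. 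Thus on $T$- and $G$-invariant valuations the inner supremum is already attained by $T$- and $G$-invariant divisors. What remains — and is the principal technical point — is to show that the outer infimum over all valuations is computed on the $T$- and $G$-invariant ones; this is the equivariant counterpart of the fact that $\alpha$-invariants are detected by invariant valuations, and I would prove it by specializing an arbitrary $v$ along a generic $\nu$ to a $T$-invariant valuation without increasing the ratio $A_{X,B}(v)/\sup_D\ord_v(D)$, using the $G$-symmetry of $A_{X,B}$ and of the family of $G$-invariant test divisors to keep track of the $G$-action.
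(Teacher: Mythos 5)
Your first half coincides with the paper's own argument: the paper also degenerates the test divisor along a one-parameter subgroup, records that $\gamma(t).D\sim D$, that $(X,\lambda\cdot\gamma(t).D)\cong(X,\lambda D)$, and that the limit is invariant, and then invokes Demailly--Koll\'ar semicontinuity \cite[Theorem~0.2]{0994.32021} to keep non-log-canonicity in the limit (the paper iterates over a basis of one-parameter subgroups rather than taking your single generic $\nu$, an immaterial difference). Where you part ways is the $G$-step: the paper finishes in one line, replacing $D_0$ by the symmetrization $\nicefrac{1}{|G|}\cdot(G.D_0)$, which is $(G\times T)$-invariant and linearly equivalent to $D$ because $G$ normalizes $T$. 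Your objection to such an averaging is substantive as far as it goes --- semicontinuity gives $c(g.D_0)=c(D_0)$ for each $g$, while convexity only bounds the threshold of the average from \emph{below}, so non-log-canonicity of the symmetrized divisor is not automatic --- but identifying a soft spot is not the same as closing it, and this is exactly where your proposal ceases to be a proof.

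The gap is that your substitute valuative argument is never carried out. The reduction of $\inf_v A_{X,B}(v)\big/\sup_D\ord_v(D)$ to $T$- and $G$-invariant valuations, which you yourself label ``the principal technical point,'' is left as a plan (``I would prove it by specializing\ldots''); it is an equivariant statement of essentially the same depth as the proposition itself, and it meets concrete obstructions you do not address. Valuations cannot be $G$-averaged, so there is no analogue of the symmetrization trick on the valuation side; your inner-supremum computation genuinely needs $v$ to be simultaneously $T$- \emph{and} $G$-invariant, since for a merely $T$-invariant $v$ averaging the weight piece $D_{\chi_0}$ over $G$ only yields $\ord_v(D')\geq\nicefrac{1}{|G|}\cdot\ord_v(D)$, losing the factor $|G|$; and for a \emph{symmetric} $T$-variety the $G$-action on $M$ (hence on the cocharacter lattice) has no nonzero fixed vectors, so there is no $G$-equivariant choice of the generic one-parameter subgroup $\nu$ along which you propose to specialize $v$ while ``keeping track of the $G$-action.'' As written, your argument establishes the case of trivial $G$ (where it agrees with the paper) but leaves the statement actually needed in the sequel --- where $D$ ranges over $|-K_X|_\QQ^G$ and the output must be $(G\times T)$-invariant --- unproven; to complete it along the paper's lines one would have to show that non-log-canonicity survives the symmetrization of $D_0$, for instance by a suitable (non-generic, $G$-adapted) choice of the degeneration, rather than discard that route outright.
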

\begin{proof}
  We argue exactly as in \cite[Section~5]{1167.14024}:  a priori the value in Definition~\ref{sec:def-lct} may grow when considering only torus invariant divisors. Let us assume that there is an effective divisor $D$ such that $(X,\lambda D)$ is not log canonical. Then we may choose a one-parameter subgroup $\gamma:\CC^* \hookrightarrow T$ and consider $D_0^\gamma=\lim_{t \rightarrow 0} \gamma(t).D$. Now, we have
\begin{enumerate}
\item $\gamma(t).D \sim D$ and $D^\gamma_0 \sim D$,
\item $(X, \lambda \cdot \gamma(t).D)$ is isomorphic to $(X,\lambda D)$, hence not log canonical, 
\item $D_0^\gamma$ is invariant by the $\CC^*$-action induced by $\gamma$.
\end{enumerate}
From \cite[Theorem~0.2.]{0994.32021} it follows, that $(X,\lambda D_0^\gamma)$ is not log canonical. After iteration with a basis of the group of one-parameter subgroups of $T$ we end up with a divisor $D_0$ which is $T$-invariant and $(X,\lambda D_0)$ is still not log canonical. Moreover, since the $G$ action normalizes the torus action $g.D_0$ will be torus invariant as well and linearly equivalent to $D$ for every element $g \in G$. Hence, the $\QQ$-divisor $\nicefrac{1}{|G|}\cdot (G.D_0)$ will be $(G \times T)$-invariant and again linearly equivalent to $D$.
\end{proof}

\section{Torus quotients and invariant divisors}
\label{sec:tvars}
For a semi-projective T-variety (i.e. projective over some affine variety) we obtain a quotient map by considering all the GIT-quotients corresponding to linearizations of an ample line bundle. These form an inverse system. The corresponding inverse limit comes with a rational map $X \dashrightarrow {\displaystyle \lim_{\longleftarrow}}\,Y_{u}$. The limit may be reducible and non-normal, but in any case we might consider the normalization of the distinguished component, which is defined as the closure of the image of this map. For a detailed discussion of this construction, see \cite{pre05013675} for the affine case and \cite{0762.14023,1147.14028} for the toric situation. We will denote the normalization of the  distinguished component simply by $X \QGIT T$. For projective varieties this construction coincides with the Chow quotient introduced by Kapranov, Sturmfels and Zelevinsky in \cite{0762.14023}, see \cite{0762.14023,1147.14028,1203.3759}. Hence, we will refer to the normalization of the distinguished component simply as the Chow quotient of $X$, even if the variety is not projective (e.g. affine).

\begin{remark}
  In general it is not easy to obtain an explicit description of $X \QGIT T$.
 Nevertheless, in important cases the quotient is well known, e.g. for the case of rational $T$-varieties of complexity one it has to be $\PP^1$. Even in higher complexity we sometimes obtain simply a projective space (e.g. for quadrics) or other well known varieties, as the moduli space $\overline{M}_{0,n}$  for the Grassmanian $G(2,n)$, see \cite{0811.14043}. Moreover, for toric varieties $X_\Sigma$ the Chow quotient with respect to a subtorus action $T' \subset T$ is given as the coarsest common refinement of the cones in the image $P(\Sigma)$, where $P:N \rightarrow N''$ is given by the  exact sequence
\[0 \longrightarrow N' \longrightarrow N \longrightarrow N'' \longrightarrow 0\]
of co-character lattices of the tori $T$, $T'$ and $T''=T'/T$, see \cite{0762.14023, pre05013675,1147.14028}.
\end{remark}

 For the Chow quotient map $\pi: X \dashrightarrow Y=X\QGIT T$ we have the following properties:
\begin{enumerate}
\item $\pi$ is $T$-invariant, $\pi^*K(Y)=K(X)^T$ holds,
\item $\pi|_\Xn$ is locally a geometric quotient onto its image. In particular, prime divisors on $\Xn$ are mapped to prime divisors on $Y$,
\item there is a natural $G$-action on $Y$ making $\pi$ a $G$-equivariant map,
\item for $U \subset X$ open and affine there is a semi-projective open subset $V \subset Y$ and a projective morphism $V \twoheadrightarrow  U\QGIT T$,
\end{enumerate}

We will also consider $G$-invariant, birational and projective modifications $Y \rightarrow X \QGIT T$ together with the induced maps $\pi:X \dashrightarrow Y$.  We call such a map simply a \emph{rational quotient map} of $X$. All the properties mentioned above hold for such a $Y$ as well. In the following we may assume that $Y$ is chosen to be at least $\QQ$-factorial.

\begin{remark}
  In \cite{pre05013675} Altmann and Hausen introduced a description of affine T-varieties by so called \emph{polyhedral divisors} on a quotient variety $Y$ as above. Property (iv) of the Chow quotient implies that in the projective case we may cover $X$ by affine open subsets corresponding to polyhedral divisors all living on $X\QGIT T$ (this leads to the notion of a \emph{divisorial fan} defined in \cite{divfans}). We will use this fact to apply several results from \cite{pre05013675,tidiv,tsing,tvars} which are formulated in the language of polyhedral divisors.
\end{remark}

\bigskip

We characterize two different types of $T$-invariant prime divisors on $X$:
\begin{itemize}
\item Those which intersect $\Xn$ are called \emph{vertical}, their images under $\pi$ are prime divisors in $Y$ (after taking the closure).
\item The prime divisors inside $X \setminus \Xn$ are called \emph{horizontal}, their image under $\pi$ is dense in $Y$.

\end{itemize}
The set of vertical divisors with image $Z \subset Y$ is denoted by $\Vert_Z$, the set of all vertical prime divisors by $\Vert$ and the set of all horizontal divisors by $\Hor$. Vertical prime divisors consist of closures of maximal orbits. Horizontal ones consist of orbit closures of dimension $(\dim T -1)$. On maximal orbits we have finite stabilizer groups. Moreover, the order of the stabilizers is fixed on an open subset of a vertical prime divisor $D$. This generic order is denoted by $\mu(D)$, and will be called simply the \emph{order of $D$}. The maximal order for all vertical prime divisors in $\Vert_Z$ is called \emph{multiplicity of $Z$} and will be denoted by $m_Z$. Note, that due to the effectiveness of the torus action there are only finitely many prime divisors $Z$ with multiplicity $>1$. For a prime divisor $Z$ on $Y$ we have then the following pullback formula, see \cite[Section~7]{tvars}:
\begin{equation}
\label{eq:1}
  \pi^*Z =  \sum_{D \in \Vert_Z} \mu(D)\cdot D.
\end{equation}

In particular, the pullback of a divisor on $Y$ does not have a horizontal component.
\begin{proposition}[{\cite[Section~8]{tvars}}]
\label{sec:K-formula}
  Let $K_Y$ be a canonical divisor on $Y$. Then 
\[\pi^*K_Y - \sum_{D \in \Hor} D + \sum_{D \in \Vert} (\mu(D)-1) \cdot D\]
defines a canonical divisor on $X$.
\end{proposition}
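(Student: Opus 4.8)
The plan is to exhibit an explicit $T$-invariant rational top-form $\Omega$ on $X$ and to read off the asserted canonical divisor as $\divisor(\Omega)$. First I would fix a rational top-form $\eta$ on $Y$ representing $K_Y$, so that $K_Y=\divisor(\eta)$, and let $\omega$ be the (up to scalar unique) $T$-invariant rational section of the relative dualizing sheaf $\omega_{X/Y}$. On a general fibre of $\pi$, which is a single $m$-dimensional orbit and hence a torus, $\omega$ restricts to the translation-invariant volume form $\tfrac{dt_1}{t_1}\wedge\cdots\wedge\tfrac{dt_m}{t_m}$. I then set $\Omega=\pi^*\eta\wedge\omega$, a rational top-form on $X$, so that $\divisor(\Omega)$ is a canonical divisor. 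Because $\pi$ is $T$-invariant and $\omega$ is translation-invariant, $\Omega$ is $T$-invariant; hence $\divisor(\Omega)$ is a $T$-invariant divisor and is supported on vertical and horizontal prime divisors only. This already explains the shape of the formula, and it remains to compute $\ord_D(\Omega)$ for $D\in\Vert$ and $D\in\Hor$.

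Along a vertical divisor $D\in\Vert_Z$ I would work at the generic point of $D$, where $X$ is smooth and the generic stabiliser is $\ZZ/\mu(D)\ZZ$. A local equation of $Z$ pulls back, by the pullback formula \eqref{eq:1}, to a function vanishing to order $\mu(D)$ along $D$; carrying this through the wedge $\pi^*(\cdots)\wedge\omega$, one power of the transverse differential is absorbed by the logarithmic pole of the fibrewise form, leaving $\ord_D(\Omega)=\mu(D)\cdot\ord_Z(\eta)+(\mu(D)-1)$. Along a horizontal divisor $D\in\Hor$ the fibre of $\pi|_D$ is a codimension-one sub-orbit of the torus fibre, i.e. a toric boundary divisor, along which the invariant volume form acquires a simple pole; thus $\ord_D(\Omega)=-1$. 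Finally, since by \eqref{eq:1} the pullback $\pi^*K_Y=\sum_{D\in\Vert}\mu(D)\cdot\ord_{\pi(D)}(\eta)\cdot D$ has no horizontal component, collecting the three contributions yields
\[ \divisor(\Omega)=\pi^*K_Y-\sum_{D\in\Hor}D+\sum_{D\in\Vert}(\mu(D)-1)\cdot D, \]
which is exactly the claimed canonical divisor.

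I expect the main obstacle to be the local computation of $\ord_D(\Omega)$ at the vertical divisors, i.e. the Riemann--Hurwitz ramification term $(\mu(D)-1)$. This requires a usable local model of the Chow quotient near the generic point of $D$, where the $T$-action has finite isotropy $\ZZ/\mu(D)\ZZ$: passing to the slice transverse to the orbit the quotient map becomes a $\mu(D)$-fold cyclic cover, and the statement reduces to the standard ramification behaviour of the invariant form under such a cover. A secondary technical point is that $\pi$ is only a rational map; but since $X$ is normal and both sides of the formula are determined in codimension one, I would restrict to the $T$-invariant open locus where $\pi$ and $\omega$ are defined, which contains the generic point of every vertical divisor (as these meet $\Xn$), and handle the horizontal divisors on the complement by the toric log-pole computation above. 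In the cited source this verification is instead carried out combinatorially in the language of polyhedral divisors; the argument sketched here is its geometric counterpart.
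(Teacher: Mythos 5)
Note first that the paper contains no proof of this proposition at all: it is imported verbatim from \cite[Section~8]{tvars}, where it is obtained combinatorially, in the language of polyhedral divisors, from the order formula for semi-invariant functions along invariant prime divisors (there $\mu(D)$ appears as the denominator of the vertex corresponding to a vertical divisor, and horizontal divisors correspond to rays of the tail fan). Your argument is therefore necessarily a different route, and it is a sound one --- the natural geometric counterpart, as you yourself observe. The skeleton is correct: $\Omega=\pi^*\eta\wedge\omega$ is a genuine rational top-form, its divisor is $T$-invariant and hence supported on $\Vert\cup\Hor$, and your three order computations are exactly the right ones; they check out on local models (e.g.\ $\A^2$ with a $\CC^*$-action of coprime weights $a,b$, quotient $s=x^b/y^a$, where $\Omega=\frac{dx}{x}\wedge\frac{dy}{y}$ and one finds $\ord_{[x=0]}\Omega=b\cdot\ord_0(\eta)+(b-1)$ with $\mu([x=0])=b$, matching your formula; note that the $(\mu(D)-1)$ can be read either as Riemann--Hurwitz from $d(x^{\mu})=\mu x^{\mu-1}dx$ or, in your phrasing, as one power of the transverse equation being absorbed by a log pole --- the two presentations agree). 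To make the sketch airtight you should do three things. First, construct $\omega$ concretely: effectiveness of the action provides semi-invariant rational functions $f_1,\dots,f_m\in K(X)$ whose weights form a $\ZZ$-basis of $M$, and $\omega=\frac{df_1}{f_1}\wedge\cdots\wedge\frac{df_m}{f_m}$ is then $T$-invariant and restricts to the Haar form on general orbits. Second, since $X$ is normal and $Y$ may be taken projective, $\pi$ is defined at every codimension-one point of $X$, so the orders along both vertical and horizontal divisors can be computed from local expressions without any toroidal resolution --- this legitimizes your restriction step. Third, the cyclic-cover model $T\times_{\ZZ/\mu(D)\ZZ}S$ at the generic point of a vertical divisor, and the product-with-a-toric-ray model $\A^1\times(\CC^*)^{m-1}\times U$ at the generic point of a horizontal one, are precisely the codimension-one cases of the Altmann--Hausen local structure theory and should be cited rather than asserted; they are where the actual content lies. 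In exchange for these inputs your proof is self-contained and transparent, whereas the combinatorial proof in \cite{tvars} has the local models built into its formalism and yields, along the way, explicit order formulas for all semi-invariant sections, which this paper uses elsewhere (e.g.\ the pullback formula~(\ref{eq:1})).
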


\begin{example}
   Consider the $\CC^*$-action on $\PP^2$ given by 
   \[t.(x_1:x_2:x_3)=(tx_1:tx_2:x_3).\]
   In this case the quotient is given by the map
   \[\pi:\PP^2 \dashrightarrow \PP^1;\quad (x_1:x_2:x_3) \mapsto (x_1:x_2).\] It is defined outside the point $(0:0:1)$. For every point $y=(a:b) \in P^1$ the set
$\Vert_{(a:b)}$ consists only of the invariant line $\{(a:b:s) \mid s \in \CC\}$ and outside of $(a:b:0)$ the stabilizer is trivial. Hence, $m_y=1$. There is exactly one horizontal prime divisor, namely the line $[x_3=0]$ consisting only of fixed points. 
\end{example}

\begin{example}[A hypersurface of bidegree $(1,2)$]
\label{sec:exmp-hypersurface}
  The hypersurface \[X=V(u_0v_0^2+u_1v_1^2+u_2v_2^2) \subset \PP^2 \times \PP^2\] of bidegree $(1,2)$ is a smooth Fano threefold. It admits a 2-torus action given by the weight matrix
\[
\begin{array}{rrrrrrrl}
  &u_0&u_1&u_2&v_0&v_1&v_2&
\vspace{2mm}\\
 \ldelim({2}{0.5ex}&2&0&0&-1&0&0&\rdelim){2}{0.5ex}\\
  &0& 2&0&0&-1&0&
\end{array}
\]
The quotient map is given by
 \[\pi:X\dashrightarrow \PP^1;\quad (u_0:u_1:u_2,\; v_0:v_1:v_2) \mapsto (u_0v_0^2:u_1v_1^2).\]
There are no horizontal prime divisors. For $y=(0:1)$ the vertical prime divisors $\Vert_y$ are the hyperplanes $[u_0=0]$ and $[v_0=0]$ having generic stabilizer of orders $1$ and $2$, respectively. Hence, one obtains multiplicity $m_y=2$. One obtains the same for the points $(1:0)$ and $(1:-1)$. For all other points $y=(a:b) \in \PP^1$ the unique vertical prime divisor in $\Vert_y$ is $[bu_0v_0^2-au_1v_1^2=0]$, having a trivial generic stabilizer. 

We have an action of $S_3\subset \mathcal{N}(T)$ on $X$ given by permuting the indices $0$, $1$, $2$ of the variables $u_i$ and $v_i$. Look at the cyclic permutation $0 \mapsto 1 \mapsto 2 \mapsto 0$. The induced lattice homomorphism is given by the matrix
$\left(\begin{smallmatrix}
  0 & -1\\
  1 & -1
\end{smallmatrix}\right)$
which has not real eigenvalues. Hence,  $X$ is symmetric and from Theorem~\ref{sec:thm-cplx-1} it follows the existence of a K\"ahler-Einstein metric, since the fibers over $0$, $\infty$ and $-1$ are non-reduced. 

The action of $S_3$ on $\PP^1$ is induced by the permutations of the points $0$, $-1$, $\infty$.
\end{example}

\section{Toroidal resolutions and discrepancies}
\label{sec:toroidal-res}
The definition of log canonicity of a T-variety $X$ a priori enforces to consider a log resolution, but it turns out that it is enough to resolve to a locally toric situation. For this we consider log-resolutions $\psi: \widetilde{Y} \rightarrow Y$ for a pair $(Y,\Delta)$ on the quotient $Y$ and lift it to a so called \emph{toroidal resolutions} $\widetilde{X}$ of $X$. Such a construction is given in \cite{tsing}.

 Given a $T$-pair $(X,D)$ with a rational quotient map $X \dashrightarrow Y$. By projecting the vertical part of $D$ this gives rise to a divisor on $Y$
 \[\supp_Y D := \pi(\supp_{\text{vert}} D).\]

\begin{proposition}
\label{sec:prop-toriodal-res}
  Given an affine $T$-pair $(X,D)$ and a rational quotient map $X \dashrightarrow Y$. Then, there are a birational $T$-invariant projective morphism $\varphi:\tX \rightarrow X$, an open subset $U \subset \tX$ and a log resolution $\tY \rightarrow Y$ of $(Y, \supp_Y D)$ having the following properties
  \begin{enumerate}
  \item There is a good quotient morphism $\widetilde{\pi}:\tX \rightarrow \tY$ fitting into the following commutative diagram
\[
\xymatrix{
\widetilde{X} \ar^{\widetilde{\pi}}[r] \ar^{\varphi}[d] & \tY  \ar^{\psi}[d]   \\
X  \ar^{\pi}@{-->}[r]  & Y 
}
\]
In particular, we have $\tX \QGIT T = \tY$.
  \item $(\tX,U)$ is an toroidal embedding, i.e. at every point $x \in \tX$ there is a toric variety $Z$ with torus $H$, such that $(\tX,U)$ is locally formally isomorphic to $(Z,H)$ at some point $z \in Z$,
  \item $\varphi|_U$ is an isomorphism onto its image,
  \item $\varphi^*D$ is supported on $\tX \setminus U$. Hence, locally it looks like a toric divisor in a toric variety $Z$.
  \end{enumerate}
\end{proposition}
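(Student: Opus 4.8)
The plan is to perform the resolution combinatorially, using the description of the affine $T$-variety $X$ by a polyhedral divisor. By property~(iv) of the Chow quotient we may assume that $(X,D)$ is encoded by a polyhedral divisor $\D=\sum_Z \Delta_Z\cdot Z$ living on the chosen $\QQ$-factorial quotient $Y$, so that $X=\spec\bigoplus_{u}\Gamma\bigl(Y,\CO(\D(u))\bigr)$. Under this presentation the vertical prime divisors in $\Vert_Z$ correspond to the vertices of the polyhedral coefficient $\Delta_Z$ and the horizontal divisors in $\Hor$ to the rays of the common tailcone; the multiplicities $\mu(D)$, and hence the pullback formula~\eqref{eq:1}, are read off from this data. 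In particular, $\supp_Y D$ together with $\supp\D$ is a divisor on $Y$ that carries all the information about the vertical part of $D$.

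First I would resolve on the base. I choose a log resolution $\psi:\tY\rightarrow Y$ of the pair $(Y,\supp_Y D)$ which in addition makes the total transform of $\supp_Y D\cup\supp\D$ a simple normal crossing divisor and $\tY$ smooth. Pulling the polyhedral divisor back along $\psi$ yields a polyhedral divisor $\psi^{*}\D$ on $\tY$; it describes a normal $T$-variety together with a proper birational $T$-equivariant morphism to $X$ that is an isomorphism over the locus where $\psi$ is an isomorphism. At this stage the base is smooth with SNC boundary, but the total space need not yet be toroidal, since the polyhedral coefficients and the tailcone may be non-simplicial and the fibres of the good quotient may be singular.

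The next step is to refine the combinatorics so that $(\tX,U)$ becomes toroidal. Over each stratum of the SNC boundary the local combinatorial datum is governed by the tailcone together with the polyhedra $\Delta_{Z_i}$ attached to the boundary components $Z_i$ meeting that stratum; I would choose a simultaneous simplicial, lattice-smooth subdivision of these data, exactly as in the toric resolution of singularities. By the construction of \cite{tsing} such a subdivision produces $\tX$ equipped with a good quotient $\widetilde\pi:\tX\rightarrow\tY$ fitting into the required commutative diagram, so that $\tX\QGIT T=\tY$ and property~(i) holds, while the induced map $\varphi:\tX\rightarrow X$ is projective, birational and $T$-invariant. Taking $U$ to be the preimage under $\widetilde\pi$ of the open complement of the boundary --- equivalently the union of maximal orbits $\tXn$ --- each point of $\tX$ has a formal neighbourhood isomorphic to a chart of a toric variety $Z$ in which $U$ is the big torus orbit, which is property~(ii), and $\varphi|_U$ is an isomorphism onto its image, property~(iii). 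Finally, since $D$ is encoded entirely in the polyhedral boundary data, $\varphi^{*}D$ is supported away from $U$ and looks locally like a toric boundary divisor, giving property~(iv).

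The main obstacle is the compatibility of the combinatorial refinement at the crossings of the SNC divisor. Over a single smooth boundary component the construction reduces to an ordinary toric resolution of the cone spanned by the tailcone and the corresponding polyhedron; the delicate point is that where several boundary components $Z_1,\dots,Z_k$ meet, one must subdivide the associated polyhedra compatibly, so that the horizontal crossing pattern of $\psi^{-1}(\supp_Y D\cup\supp\D)$ and the vertical fan subdivision together assemble into a genuine product toric local model, and so that the resulting space is normal with a good quotient onto $\tY$. This is precisely the technical content of the toroidal-resolution construction in \cite{tsing}, which I would invoke to finish the proof.
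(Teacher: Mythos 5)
Your overall strategy --- resolve the base so that the pulled-back polyhedral data has SNC support and invoke the construction of \cite{tsing} --- is essentially the paper's: the paper phrases the same construction via normalized graph closures (first the graph of $\pi$ in $X\times Y$, which exhibits the locus $\Delta$ of pure codimension one where the quotient fibration fails to be trivial, then a log resolution of $(Y,\Delta\cup\supp_Y D)$ and the normalized graph closure of $X\dashrightarrow\tY$); your insistence on making $\supp\D$ SNC plays exactly the role of $\Delta$. The genuine gap is your choice of $U$. You set $U=\widetilde{\pi}^{-1}(\tY\setminus\text{boundary})$ and assert this is ``equivalently'' $\tXn$; the two sets are not equal, and neither works. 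Over a point of $\tY$ outside the boundary the fiber of $\widetilde{\pi}$ is the whole toric variety of the tailcone, not just the torus, so $\widetilde{\pi}^{-1}(\tY\setminus\text{boundary})$ contains non-maximal orbits; conversely $\tXn$ contains maximal orbits lying over the boundary. Concretely, property (iv) fails for both choices: horizontal components of $D$ dominate $Y$, hence $\supp\varphi^*D$ meets $\widetilde{\pi}^{-1}(\tY\setminus\text{boundary})$; and the vertical components of $\varphi^*D$ are closures of maximal orbits, hence meet $\tXn$. This is not a corner case --- in the application the divisors $Q_X$ always contain $\sum_{D\in\Hor}D$. The correct choice is the locus of \emph{free} orbits over the complement of the boundary, $U\cong T\times(\tY\setminus\text{boundary})$, so that $\tX\setminus U$ contains every horizontal divisor and every vertical divisor over the boundary; with that $U$, (ii)--(iv) hold.

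A second, conceptual slip: your claim that after resolving the base ``the total space need not yet be toroidal,'' requiring a simplicial, lattice-smooth subdivision, misreads the definition of a toroidal embedding, which only asks for formal local models $(Z,H)$ with $Z$ toric and \emph{possibly singular}. The actual content of Section~2 of \cite{tsing} is that once the base is smooth and the relevant support is SNC, the total space is already toroidal, with local models the Cayley-type cones built from the tailcone and the polyhedral coefficients attached to the boundary components through the point; the subdivision step in \cite{tsing} belongs to the subsequent full desingularization, which this proposition deliberately avoids (the whole point of Section~\ref{sec:toroidal-res} is that resolving to a locally toric situation suffices). So your extra subdivision is unnecessary, and your appeal to \cite{tsing} for the ``compatibility at crossings'' of that subdivision cites it for work it does at a later stage. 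With the corrected $U$, the SNC stage alone already yields properties (i)--(iv), which is how the paper's construction proceeds.
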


The corresponding construction is given in Section~2 of \cite{tsing}, by using the language of polyhedral divisors. Using the results of \cite{2012arXiv1202.5760C} we can rephrase it as follows.

We resolve the indeterminacy of $\pi:X \dashrightarrow Y$ by considering the normalization of the closure of the graph of $\pi$ in $X \times Y$. This is again a T-variety. We denote it by $W$. The projection to the second factor gives a good quotient morphism $W \rightarrow Y$, which is a trivial fibration outside a subset $\Delta$ of pure codimension one. Now, we choose a log resolution $\tY \rightarrow Y$ of $(Y,\Delta \cup \supp_Y D)$ and denote the normalization of the closure of the graph of $X \dashrightarrow \tY$ in $X \times \tY$ by $\tX$. The projections to $X$ and $\tY$ induce the morphisms $\varphi$ and $\widetilde{\pi}$, respectively.

The following lemma shows that it is enough to resolve to such a locally toric situation for checking the log canonicity of a $T$-pair $(X,D)$, consisting of a $T$-variety and a $T$-invariant divisor.
\begin{lemma}
  The pair $(X,D)$ is log canonical if and only if the discrepancies of $\psi$ are at least $-1$, i.e. we have canonical divisors on $X$ and $\tX$, such that
\[K_{\tX} = \psi^*(K_X + D) + E \]
holds, with $E$ being supported only on $\supp D$ and the exceptional divisor of $\psi$ and the coefficients of $E$ are at least $-1$.
\end{lemma}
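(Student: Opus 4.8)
The plan is to prove the two implications separately; the forward direction is essentially tautological, while the reverse direction is where the toroidal structure furnished by Proposition~\ref{sec:prop-toriodal-res} carries the argument, reducing everything to a combinatorial computation on toric charts.

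For the ``only if'' direction I would simply recall that log canonicity of $(X,D)$ means that the discrepancy of \emph{every} geometric prime divisor over $X$ is at least $-1$, a condition that is intrinsic to the valuation and independent of the chosen model. Since $\varphi:\tX \rightarrow X$ is a proper birational $T$-morphism, the components of $E$ — namely the strict transforms of $\supp D$ together with the $\varphi$-exceptional divisors — are in particular prime divisors over $X$, so their discrepancies, which are exactly the coefficients of $E$, are automatically $\geq -1$.

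For the ``if'' direction I would set $\Delta := -E$, so that the stated identity becomes $K_{\tX}+\Delta = \varphi^*(K_X+D)$, exhibiting $(\tX,\Delta)$ and $(X,D)$ as crepant. Consequently, for any prime divisor $F$ over $\tX$ — equivalently over $X$, since $\varphi$ is birational — realized on a common higher model one has $a(F;X,D)=a(F;\tX,\Delta)$, and likewise the prime divisors lying on $\tX$ itself have matching discrepancies; hence it suffices to prove that $(\tX,\Delta)$ is log canonical. The hypothesis that the coefficients of $E$ are $\geq -1$ says precisely that the coefficients of $\Delta$ are $\leq 1$, and by properties (iii) and (iv) of Proposition~\ref{sec:prop-toriodal-res} the support of $\Delta$ lies entirely in the toroidal boundary $\tX\setminus U$. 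I would then localize: around each point $\tX$ is formally isomorphic to a toric variety $(Z,H)$ under which $\tX\setminus U$ becomes the toric boundary $\partial Z=\sum_\rho D_\rho$ and $\Delta$ becomes a torus-invariant divisor $\sum_\rho c_\rho D_\rho$ with $c_\rho\leq 1$. Resolving $(\tX,\Delta)$ by a toric subdivision of the associated conical complex, the discrepancy of the divisor $D_\tau$ attached to a new ray $u_\tau=\sum_j\lambda_j v_{\rho_j}$ (with $\lambda_j\geq 0$, since $u_\tau$ lies in the corresponding cone) is computed combinatorially from $K_Z=-\sum_\rho D_\rho$ to be
\[ a(D_\tau;\tX,\Delta) = -1 + \sum_j \lambda_j\,(1-c_{\rho_j}), \]
which is $\geq -1$ precisely because each $c_{\rho_j}\leq 1$. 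Thus every discrepancy over $(\tX,\Delta)$ is $\geq -1$, so $(\tX,\Delta)$ is log canonical, and by the crepant identity so is $(X,D)$.

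The main obstacle I expect is not the toric arithmetic but justifying the passage from the merely \emph{formally} toric model $\tX$ to a genuine smooth log resolution: one must invoke the toroidal-embedding theory of \cite{tsing} to know that modifications and resolutions of $(\tX,U)$ are governed by subdivisions of its conical polyhedral complex and that the local toric discrepancy formula is compatible across charts, so that checking discrepancies on toric charts genuinely certifies log canonicity of the whole pair. A secondary technical point to verify along the way is that $K_{\tX}+\Delta$ is $\QQ$-Cartier — which holds because $K_X+D$ is $\QQ$-Cartier and $K_{\tX}+\Delta=\varphi^*(K_X+D)$ — since this is what makes the associated support function piecewise linear and the discrepancies well defined. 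The reduction to the affine situation of Proposition~\ref{sec:prop-toriodal-res} is harmless, as log canonicity is local on $X$ and $X$ is covered by $T$-invariant affines by property (iv) of the Chow quotient.
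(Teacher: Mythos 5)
Your proposal is correct and follows essentially the same route as the paper's proof: the forward direction is the standard model-independence of discrepancies, and the reverse direction sets $\Delta=-E$, uses the crepant identity $K_{\tX}+\Delta=\varphi^*(K_X+D)$ to reduce to log canonicity of $(\tX,-E)$, and settles that via the toric criterion (boundary coefficients $\leq 1$) in the formally toric charts of Proposition~\ref{sec:prop-toriodal-res}. The only difference is one of detail: you spell out the toric discrepancy formula and flag the toroidal-embedding machinery explicitly, where the paper dismisses these as ``well known (and can easily be checked) for toric pairs.''
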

\begin{proof}
  One direction is obvious. Let $\theta$ be a log resolution of $(\tX,-E)$. Then the discrepancies of $\theta$ with respect to $(\tX,-E)$ are the same as the discrepancies of $\theta \circ \varphi$ with respect to $(X,D)$. Hence, for the other direction we have to show $(\tX, -E)$ is log canonical provided that the coefficients of $E$ are at least $-1$. But this is well known (and can easily be checked) for toric pairs. Since, we are locally formally in a toric situation the claim follows.
\end{proof}

In general we consider ``twiddled'' versions of our objects defined on $X$, i.e.
we will denote the horizontal and vertical prime divisors of $\tX$ by $\widetilde{\Hor}$ and $\widetilde{\Vert}$, respectively. The set of maximal orbits in $\tX$ will be denoted by $\tXn$.

\section{Comparing thresholds}
\label{sec:comparing-thresholds}
Remember, that we defined a boundary divisor $B=\sum_Z \frac{m_Z-1}{m_Z} \cdot Z$ on $Y$, where $m_Z$ is the maximal order of a generic stabilizer on a vertical divisor over $Z$. If there are no such divisors we set $m_Z=0$ and obtain an $-\infty$-coefficient for the boundary at $Z$. $Y^\circ$ was defined as the open set $Y \setminus \bigcup_{\Vert_Z=\emptyset} Z$.

\begin{lemma}
\label{sec:lemma-invariant-canonicals}
  Let $X$ be a symmetric T-variety and $G$ the corresponding finite subgroup $G \subset \mathcal{N}(T)$. Then  $(T\times G)$-invariant $\QQ$-divisors $D$, with $Q_X \sim_\QQ -K_X$ are exactly those of the form
  \begin{equation}
    \label{eq:2}
    Q_X=\pi^*Q_Y + \sum_{D \in \Hor} D + \sum_{D \in \Vert} (1-\mu(D)) \cdot D
  \end{equation}
  where $Q_Y$ is an  $G$-invariant divisor on $Y$ with $Q_Y\sim_\QQ -K_Y$.

  Moreover, $Q_X$ is effective iff $Q_Y \geq B$.
\end{lemma}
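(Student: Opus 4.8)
The plan is to compare everything with the specific canonical representative furnished by Proposition~\ref{sec:K-formula}, which rewrites as
\[-K_X = \pi^*(-K_Y) + \sum_{D\in\Hor} D + \sum_{D\in\Vert}(1-\mu(D))\cdot D.\]
The asserted shape of $Q_X$ is thus exactly ``$\pi^*$ of something $\sim_\QQ -K_Y$, plus the fixed horizontal and vertical correction terms.'' For the easy inclusion I would start from a $G$-invariant $Q_Y\sim_\QQ -K_Y$, define $Q_X$ by \eqref{eq:2}, and observe $Q_X-(-K_X)=\pi^*(Q_Y+K_Y)\sim_\QQ 0$, so $Q_X\sim_\QQ -K_X$. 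The invariance is then bookkeeping: $\pi^*Q_Y$ is $G$-invariant because $\pi$ is $G$-equivariant and $Q_Y$ is $G$-invariant, while $G\subset\mathcal{N}(T)$ permutes the horizontal divisors and permutes the vertical ones preserving $\mu$ (conjugate stabilizers have equal order), so both correction sums are $G$-invariant; $T$-invariance is immediate.

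For the nontrivial inclusion, take any $(T\times G)$-invariant $Q_X\sim_\QQ -K_X$ and pick $m$ with $mQ_X-m(-K_X)=\divisor(f)$ for some $f\in K(X)$. Since both $mQ_X$ and the chosen representative $m(-K_X)$ are $T$-invariant, $\divisor(f)$ is $T$-invariant, which forces $f$ to be a $T$-semi-invariant of some weight $\chi\in M$ (using that global units on the complete variety $X$ are constant). The crucial step is $\chi=0$: as $\divisor(f)=mQ_X-m(-K_X)$ is also $G$-invariant, for each $g\in G$ the function $g\cdot f$ has the same divisor as $f$ and hence differs from it by a scalar; since the weight of $g\cdot f$ is $g\cdot\chi$ (the $G$-action on $M$ being by conjugation), comparing weights gives $g\cdot\chi=\chi$, so $\chi\in M^G$. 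By the symmetry hypothesis $M^G=\{0\}$, whence $\chi=0$. Thus $f$ is $T$-invariant and, by property~(i), $f=\pi^*h$ for some $h\in K(Y)$.

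Substituting $\divisor(f)=\pi^*\divisor(h)$ into $Q_X=-K_X+\tfrac1m\divisor(f)$ and using the rewritten canonical formula yields $Q_X=\pi^*\bigl(-K_Y+\tfrac1m\divisor(h)\bigr)+\sum_\Hor D+\sum_\Vert(1-\mu(D))D$, so $Q_Y:=-K_Y+\tfrac1m\divisor(h)$ works and satisfies $Q_Y\sim_\QQ -K_Y$. Its $G$-invariance follows by reversing the easy direction: $\pi^*Q_Y=Q_X-\sum_\Hor D-\sum_\Vert(1-\mu(D))D$ is $G$-invariant, and $\pi^*$ is injective on divisors by the positive-coefficient pullback formula \eqref{eq:1}, so $g\cdot Q_Y=Q_Y$ for all $g$. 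For the effectivity claim I would expand $\pi^*Q_Y$ via \eqref{eq:1}: writing $Q_Y=\sum_Z q_Z Z$, the coefficient in $Q_X$ of a vertical $D$ over $Z$ becomes $\mu(D)(q_Z-1)+1$, whereas every horizontal coefficient is $1$. Hence $Q_X\geq 0$ is equivalent to $q_Z\geq\frac{\mu(D)-1}{\mu(D)}$ for all vertical $D$ over $Z$; the binding constraint is the maximal $\mu(D)$, namely $m_Z$, giving $q_Z\geq\frac{m_Z-1}{m_Z}$ for all $Z$, i.e. $Q_Y\geq B$ (the $-\infty$ convention covering the $Z$ with $\Vert_Z=\emptyset$).

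The main obstacle is the vanishing of the weight $\chi$: this is precisely where the ``symmetric'' hypothesis is indispensable, and it requires care in passing from $G$-invariance of the divisor $\divisor(f)$ to $G$-invariance up to scalar of $f$, and thence to $G$-fixedness of $\chi$. The remaining ingredients — that a $T$-invariant principal divisor is the divisor of a semi-invariant, the identity $\pi^*K(Y)=K(X)^T$, and the effectivity bookkeeping — are either cited properties or routine.
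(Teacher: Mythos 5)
Your proposal is correct and follows essentially the same route as the paper: the forward inclusion via Proposition~\ref{sec:K-formula} plus invariance bookkeeping, the converse by reducing to a $(T\times G)$-invariant principal divisor $\divisor(f)$, deducing that $f$ is semi-invariant of a $G$-fixed weight and invoking the symmetry hypothesis $M^G=\{0\}$ to get $f\in K(X)^T=\pi^*K(Y)$, and the effectivity claim via the pullback formula~(\ref{eq:1}) with the binding constraint at $\mu(D)=m_Z$. Your step ``$\divisor(f)$ is also $G$-invariant'' tacitly requires the reference anticanonical representative (equivalently, $K_Y$) to be chosen $G$-invariantly, e.g.\ by averaging over $G$ as a $\QQ$-divisor --- but this is exactly the same implicit normalization the paper makes when it asserts ``we may assume that $f$ is $G$-invariant,'' so it is a shared gloss rather than a divergence.
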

\begin{proof}
  Due to Proposition~\ref{sec:K-formula} the divisor $Q_X$ defined in (\ref{eq:2}) is linearly equivalent to $-K_X$. It is obviously $T$-invariant. The summand $\pi^*Q_Y$ is a $G$-invariant, since the $G$-action on $Y^\circ$ is defined via $\pi$. Horizontal prime divisors are necessarily mapped to horizontal ones by any equivariant automorphism. Hence, $\sum_{E \in \Hor} E$ is $G$-invariant. Similarly, vertical prime divisors are mapped to vertical ones. Moreover, the order of a vertical prime divisor is an invariant of the $T$-action on $X$. Hence, $\sum_{D \in \Vert} \left(1-\mu(D)\right) \cdot D$ is $G$-invariant, as well.

For the opposite direction consider a $(T \times G)$-invariant principal divisor $\divisor(f)$ on $X$. Then $f$ is a semi-invariant rational function, i.e. $f(t.x) = u(t) \cdot f(x)$ holds for some character $u \in M$, i.e. $\deg(f)=u$. Moreover, we may assume that $f$ is $G$-invariant, in particular the character $u$ has to be $G$-invariant. Now, the symmetry condition on $X$ implies that $u=0$ and $f \in K(X)^T=\pi^*K(Y)$. Hence, $\divisor(f) = \pi^*\divisor(g)$ holds for some principal divisor $\divisor(g)$ on $Y$. Therefore, if some divisor $D$ differs from $Q_X$ in (\ref{eq:2}) only by a divisor having a principal multiple, then $D$ is also of the desired form.

$Q_Y \geq B$ holds if and only if $Q_Y-B$ is effective on $Y^\circ$. This is equivalent to the effectiveness of $\pi^*(Q_Y-B)$. By the pullback formula (\ref{eq:1}) one obtains 
\begin{align*}
  \pi^*(Q_Y-B) &= \pi^*Q_Y  + \sum_Z \sum_{D \in \Vert_Z} (1-m_Z) \cdot D \\
              &\geq \pi^*Q_Y  + \sum_Z \sum_{D \in \Vert_Z} (1-\mu(D)) \cdot D\\
              &= \pi^*Q_Y  + \sum_{D \in \Vert} (1-\mu(D)) \cdot D
\end{align*}

Hence, $Q_Y \geq B$ implies the effectiveness of $Q_X$. For one of the prime divisors $D \in \Vert_Z$ we have $\mu(D)=m_Z$. Therefore, we also obtain the opposite direction. 
\end{proof}

\begin{proof}[Proof of Theorem~\ref{sec:thm-main}]
  At the beginning we introduce some notation.
  \begin{itemize}
  \item We consider the toroidal resolution and the maps from Proposition~\ref{sec:prop-toriodal-res}.
  \item  As before we consider  $G \times T$-invariant effective divisors $Q_X \sim_\QQ -K_X$.
  \item We set \[B_X=\sum_{D \in \Vert} (\mu(D)-1) \cdot D, \qquad B_{\widetilde{X}}=\sum_{\tD \in \widetilde{\Vert}} (\mu(\tD)-1) \cdot \tD\]
  \item We may choose some representations of the canonical classes on $\tX$ and $X$, respectively:
\[K_X=\pi^*K_Y + B_X- \sum_{D \in {\Hor}} D, \qquad K_{\tX}=\widetilde{\pi}^*K_\tY + B_\tX  - \sum_{\widetilde{D} \in \widetilde{\Hor}} \widetilde{D}\]
  \end{itemize}

As above we also choose a $G$-invariant effective divisor $Q_X\sim_\QQ -K_X$. Then we have $Q_X = \pi^* Q_Y - B_X$, with $(Q_y - B) \in |-K_Y-B|_\QQ^G$ by Lemma~\ref{sec:lemma-invariant-canonicals} and $\pi^*B \geq B_X$. If $\lambda \leq 1$ one obtains for $E_\tX:= K_{\tX} - \varphi^*(K_X + \lambda Q_X)$ on $\tXn$
  \begin{align}
    E_\tX &= K_{\tX} - \varphi^*(K_X + \lambda Q_X)\nonumber \\
    &=  \widetilde{\pi}^*K_{\tY} +B_\tX - \varphi^*(\pi^*K_Y +B_X + \lambda(\pi^*Q_Y- B_X))\nonumber \\
    &= \widetilde{\pi}^*K_{\tY} +B_\tX - \varphi^*(\pi^*K_Y +(1-\lambda)B_X + \lambda \pi^*Q_Y)\nonumber \label{eq:3}\\
    &\geq \widetilde{\pi}^*K_{\tY} +B_\tX - \varphi^*(\pi^*K_Y +(1-\lambda)\pi^*B + \lambda \pi^*Q_Y)\\
    &= \widetilde{\pi}^*(K_\tY - \psi^*(K_Y + B + \lambda (Q_Y - B))) + B_\tX \nonumber
  \end{align}

  Note, that here we used that $\pi$ is defined on $\Xn$ and surjective. Hence, $\tilde{\pi}^* \circ \psi^*$ and $\varphi^* \circ \pi^*$ coincide as homomorphisms $\wdiv_\QQ(Y) \rightarrow \wdiv_\QQ(\tXn)$. Now, assume, that $(Y,B + \lambda (Q_Y-B))$ is log canonical, then by definition  all the coefficients of $E_\tY :=K_\tY - \psi^*(K_Y + B + \lambda (Q_Y - B))$ are at least $-1$, but by our pullback formula (\ref{eq:1}) we infer that the coefficients of $\widetilde{\pi}^*E_Y + B_\tX$ are $\geq -1$ as well. If we replace $X^\circ$ by an open subset obtained by removing all vertical prime divisors $D$ which are not of maximal order in $\Vert_{\pi(D)}$, then we even have equality in (\ref{eq:3}). Hence, the coefficients of $E_\tX$ are $\geq -1$ if and only if the same is true for $E_\tY$.

It remains to check the discrepancies at the horizontal divisors. Remember, that we have $Q_X \sim_\QQ -K_X$, i.e. $(Q_X + K_X) =  \nicefrac{1}{m} \cdot \divisor(f)$ for some torus invariant function $f \in K(X)^T$. We get
\begin{align*}
 K_\tX - \varphi^*(K_X+ \lambda Q_X) &=  K_\tX - \varphi^*(K_X+Q_X + (\lambda-1)Q_X)\\
&= K_\tX - \varphi^*(\nicefrac{1}{m}\cdot \divisor(f)) + \varphi^*((1-\lambda)Q_X)\\
&\geq  K_\tX -\nicefrac{1}{m}\cdot\divisor(\varphi^*f)
\end{align*}

Note, that we assumed $\lambda \leq 1$, again. Now, $\varphi^*f$ is again a torus invariant function. Hence, $\divisor(\varphi^*f)$ is supported only on vertical divisors and for $K_\tX = \widetilde{\pi}^* K_\tY + B_\tX - \sum_{\tD \in \widetilde{\Hor}} \tD$  all the coefficients at horizontal divisors are $-1$.

It remains to show that we can indeed assume that $\lambda \leq 1$, i.e. $\lct(X,G) \leq 1$. First note, that the resolution $\tX \rightarrow \tY$ is a fibration into complete toric varieties. Let us denote the general fiber by $X'$ and an arbitrary invariant prime divisor of $X'$ by $D$. Now, there is a big open subset of the form $X' \times U \subset \tX$ with $U \subset \tY$. Now, $D \subset X'$ gives rise to a horizontal prime divisor $H=\overline{D \times U} \subset \tX$. In particular there exists a horizontal divisor $H$ on $\tX$. We choose $Q_X=K_X$. Now we have $K_\tX - \varphi^*(K_X+ \lambda Q_X) = K_\tX + (1-\lambda)\cdot\varphi^*K_X$. Consider a horizontal divisor $H$ on $\tX$. On the one hand then the coefficient of $K_\tX$ at $H$ is $-1$. On the other hand by the fact that $X$ is log terminal, we know that the coefficient of $K_\tX - \varphi^*K_X$ is bigger than $-1$. Hence, the coefficient of $\varphi^*K_X$ at $H$ is positive and  the coefficient of $K_\tX + (1-\lambda)\cdot\varphi^*K_X$  is $\geq -1$ if and only if $\lambda \leq 1$.
\end{proof}

\begin{proof}[Proof of the Theorem~\ref{sec:thm-sufficient}]
In general we do not have $(\pi \circ \varphi)^*D = (\psi \circ \widetilde{\pi})^*D$ on $\tX$, since functoriality does not hold for rational maps. Nevertheless, we have functoriality for rational functions, hence $(\pi \circ \varphi)^*D = (\psi \circ \widetilde{\pi})^*D$ holds if $D \sim_\QQ 0$. Therefore, on $\tXn$ we have
\begin{align*}
 K_\tX - \varphi^*(K_X+ Q_X) &=  K_\tX - \varphi^*(\pi^*(K_Y+Q_Y))\\
&= K_\tX - \widetilde{\pi}^*(\psi^*(K_Y+Q_Y))\\
&= \widetilde{\pi}^*(K_{\tY} - \psi^*(K_Y+Q_Y)) + B_\tX.
\end{align*}
By the pullback formula~(\ref{eq:1}) the coefficients of this divisor are $\geq -1$ if and only if this is true for the divisor $K_{\tY} - \psi^*(K_Y+Q_Y)$, but the latter is equivalent to the log canonicity of the pair $(Y,Q_Y)$. Hence, $G$ is \sufficient for $X$ if and only if the same is true for $(Y,B)$.
\end{proof}

\begin{proof}[Proof of the Theorem~\ref{sec:thm-cplx-1}]
  For the case of complexity one the quotient has to be $\PP^1$ (it has to be a curve and the Fano property implies a finitely generated Picard group) and the quotient map is indeed defined on the whole set $\Xn$ and automatically surjective. One easily checks (see e.g. Example~\ref{sec:exmp-proj-line}) that for $B=\sum_P \frac{m_P-1}{m_P}$ we have
  $\lct_G(\PP^1,B) \geq 1$ exactly in the following case
  \begin{enumerate}
  \item $\#\supp B \geq 3$
  \item $\#\supp B = 2$ and $G$ acts without fixed point on  $\supp B$
  \item $G$ acts without fixed point on  $\PP^1$
  \end{enumerate}
These are exactly the cases from Theorem~\ref{sec:thm-cplx-1}. Now, the claim follows from Tian's theorem.
\end{proof}

\section{Examples and K\"ahler-Einstein metric on their deformations}
\label{sec:examples}

\subsection{Hypersurfaces of bidegree $(1,2)$}
We come back to the manifold of Example~\ref{sec:exmp-hypersurface}, i.e. the hypersurface $V(xu^2+yv^2+zw^2) \subset \PP^2 \times \PP^2$ of bidegree $(1,2)$. By Theorem~\ref{sec:thm-cplx-1} this manifold admits a K\"ahler-Einstein metric. A semi-universal deformation of $X$ is given by the family 
\[\X_{\alpha,\beta,\gamma}=V(xu^2+yv^2+zw^2+\alpha\cdot xvw + \beta \cdot yuw + \gamma \cdot zuv)\]
over $(\CC^3,0)$. Note, that the action of $(\CC^*)^2$ on $X=\X_0$ naturally extends to an action on this family. Remember the weights for the action on $X$:
\[
\begin{array}{rrrrrrrl}
  &x&y&z&u&v&w&
\vspace{2mm}\\
 \ldelim({2}{0.5ex}&2&0&0&-1&0&0&\rdelim){2}{0.5ex}\\
  &0& 2&0&0&-1&0&
\end{array}
\]
Hence, the equation of the family has to be of weight $0$ and the weights for the action on the base are given by
\vspace{-1em}
\[
\begin{array}{rrrrl}
  &\alpha&\beta&\gamma&
\vspace{2mm}\\
 \ldelim({2}{0.5ex}&-2&1&1&\rdelim){2}{0.5ex}\\
  &1&-2&1&
\end{array}
\]
A point $(\alpha,\beta,\gamma)$ is \emph{polystable}, i.e. has a closed $(\CC^*)^2$-orbit under this action, if and only if $\alpha\beta\gamma \neq 0$. This can be seen by checking the existence of the limits \[\lim_{t \rightarrow 0} (t^{v_1},t^{v_2}).(\alpha,\beta,\gamma)\] 
for every one-parameter subgroup $t \mapsto (t^{v_1},t^{v_2})\in (\CC^*)^2$. Alternatively one can use the explicit combinatorial criterion in terms of balanced families of weights, given in  \cite[Proposition 3.3.3.]{1201.4137}.

Now, we have the following statement
\begin{proposition}
\label{sec:prop-deformations-hypersurface}
   For a sufficiently small neighborhood $0 \in \B \subset \CC^3$ the manifold $\X_{\alpha,\beta,\gamma}$ admits a K\"ahler-Einstein metric if and only if $\alpha\beta\gamma \neq 0$.
 \end{proposition}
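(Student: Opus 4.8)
The plan is to abandon the global log canonical threshold machinery for this step: for $\alpha\beta\gamma\neq 0$ the stabiliser of $(\alpha,\beta,\gamma)$ in $T=(\CC^*)^2$ is finite, so the deformed fibre carries no positive-dimensional torus symmetry and Theorem~\ref{sec:thm-main} is not directly applicable. Instead I would reduce the question to the finite-dimensional stability problem on the base $\CC^3$ that has just been solved. The input is that $X=\X_{0,0,0}$ is K\"ahler-Einstein by Theorem~\ref{sec:thm-cplx-1}; hence $G:=\Aut^0(X)$ is reductive. Since $(\CC^3,0)$ is semi-universal, its tangent space is the deformation space $H^1(X,\mathcal{T}_X)$ with coordinates $(\alpha,\beta,\gamma)$, and the $G$-action restricts on $T\subset G$ to the linear representation with the weight matrix displayed above. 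A preliminary, and essential, step is to confirm that $G=T$, so that the full linearised action on $H^1(X,\mathcal{T}_X)$ is exactly the torus action whose closed orbits we already understand.

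The central tool is the principle governing K\"ahler-Einstein metrics on small deformations, worked out for the Mukai-Umemura threefold by Tian \cite{0892.53027} and Donaldson \cite{1161.53066}: for $v$ in a small neighbourhood of $0$ in $H^1(X,\mathcal{T}_X)$ the fibre $\X_v$ admits a K\"ahler-Einstein metric if and only if the orbit $G\cdot v$ is closed, i.e. $v$ is polystable. Granting this, the proposition follows from the computation already recorded before the statement, namely that the $T$-orbit of $(\alpha,\beta,\gamma)$ is closed precisely when $\alpha\beta\gamma\neq 0$.

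For the direction ``not K\"ahler-Einstein'' I would make the obstruction explicit, so as to rely only on the established implication that a K\"ahler-Einstein Fano manifold is K-polystable \cite{2012arXiv1210.7494C,2012arXiv1211.4669T,2013arXiv1302.0282C}. If $\alpha\beta\gamma=0$ but $(\alpha,\beta,\gamma)\neq 0$, the orbit is not closed and the destabilising one-parameter subgroup degenerates $\X_{\alpha,\beta,\gamma}$ inside the extended family to the more symmetric central fibre $X$. As $X$ is smooth this is a special, non-product test configuration (its central fibre $X$ is not isomorphic to the general fibre, their automorphism groups having different dimensions), and its Donaldson-Futaki invariant equals the Futaki invariant of $X$ with respect to the induced one-parameter subgroup, which vanishes because $X$ is K\"ahler-Einstein. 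Thus $\X_{\alpha,\beta,\gamma}$ is strictly K-semistable, hence not K-polystable, hence not K\"ahler-Einstein. For the converse, $\alpha\beta\gamma\neq 0$ gives a closed orbit, and the principle above --- openness of the K\"ahler-Einstein condition along the polystable locus, now available in general through the Yau-Tian-Donaldson correspondence \cite{2012arXiv1210.7494C,2012arXiv1211.4669T,2013arXiv1302.0282C} --- produces the metric. The origin itself is the one closed orbit with $\alpha\beta\gamma=0$; it is the base point $X$, which is K\"ahler-Einstein, so it is the evident exception to the literal formulation and should be recorded as such.

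I expect the entire difficulty to lie in justifying this Kuranishi-GIT principle in the precise form needed, and not in any of the surrounding geometry. Concretely, the two points to secure are the identification $G=\Aut^0(X)=(\CC^*)^2$ --- which is what guarantees that every point with $\alpha\beta\gamma\neq 0$ is genuinely polystable, rather than being destabilised by some larger connected symmetry group --- and the analytic input that polystable fibres in the Kuranishi family of a K\"ahler-Einstein Fano manifold again admit K\"ahler-Einstein metrics. Once these are in place the combinatorics of the three weights is immediate and the proposition drops out.
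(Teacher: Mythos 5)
Your reduction of the proposition to polystability of $(\alpha,\beta,\gamma)$ under the $(\CC^*)^2$-action on the base of the semi-universal deformation is exactly the paper's route, and the two prerequisites you flag are the right ones. But the paper closes the argument in three lines by checking the hypotheses of Theorem~\ref{sec:deform-csck} (quoted from \cite[Theorem~1.2.3]{1201.4137}): $X$ is simply connected because it is Fano, $H^2(X,\CO_X)=0$ and $H^2(X,\T_X)=0$ hold by Kodaira vanishing, and a K\"ahler--Einstein metric is a cscK metric in the anticanonical class; that theorem then gives the equivalence ``cscK on $\X_t$ iff $t$ polystable'' in both directions at once, and the weight computation preceding the proposition finishes the proof. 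Your treatment of the ``only if'' half is a genuinely different and essentially viable route: the destabilising one-parameter subgroup degenerates $\X_{\alpha,\beta,\gamma}$ inside the family to the symmetric fibre $X$, the resulting special test configuration has Donaldson--Futaki invariant equal to the Futaki invariant of $X$, which vanishes, and ``K\"ahler--Einstein implies K-polystable'' yields non-existence. You do still owe the check that this configuration is non-product, i.e.\ $\X_{\alpha,\beta,\gamma}\not\cong X$: your appeal to ``different dimensions of automorphism groups'' is unproven, since for instance the stabiliser in $T$ of a point $(\alpha,0,0)$ is one-dimensional, so $\X_{\alpha,0,0}$ carries at least a $\CC^*$-action and $\Aut^\circ(\X_{\alpha,0,0})$ has not been computed.

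The genuine gap is in the ``if'' half. Closedness of the $T$-orbit of $t$ in $H^1(X,\T_X)$ is a statement only about the degenerations visible inside the Kuranishi family, whereas K-polystability of $\X_t$ quantifies over all test configurations; so ``the Yau--Tian--Donaldson correspondence produces the metric'' is circular as written -- the YTD theorems convert K-polystability into existence, and you have no independent proof of K-polystability of $\X_t$ for $\alpha\beta\gamma\neq 0$. Indeed, if polystability in the Kuranishi space implied K-polystability by soft arguments, Theorem~\ref{sec:deform-csck} would be superfluous. What fills this hole is precisely the analytic statement you label ``the analytic input to secure'' and then never secure: in the paper it is Theorem~\ref{sec:deform-csck}, whose proof in \cite{1201.4137} is a deformation/slice argument in the spirit of the Tian--Donaldson analysis of the Mukai--Umemura threefold, not a consequence of YTD. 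As written, your proposal therefore establishes only the non-existence half, and defers the existence half to an unproven principle. Two smaller points: the hypothesis $\Aut^\circ(X)=T$, which you rightly call essential, is asserted rather than verified (the paper's proof is equally silent on it, though it is a hypothesis of Theorem~\ref{sec:deform-csck}); and your remark about the origin is correct -- $(0,0,0)$ has closed orbit and $\X_0=X$ is K\"ahler--Einstein, so both the proposition and the paper's ``polystable iff $\alpha\beta\gamma\neq 0$'' must be read as statements about $t\neq 0$.
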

 
To prove this behavior we are using the following theorem implicitly given in \cite{1201.4137}:
\begin{theorem}
\label{sec:deform-csck}
  Assume that a simply connected smooth $T$-variety $X$ admits a K\"ahler metric of constant scalar curvature (cscK metric) in the class $c(L)$ for an ample line bundle $L$. Consider the semi-universal deformation $\X \rightarrow \B$, where $0 \in \B \subset H^1(X,\T_X)$ is a sufficiently small neighborhood of $0$. If $H^2(X,\CO)=0$ then $L$ lifts to a line bundle $\L$ on $\X$\footnote{see e.g. \cite[3.3.11]{1102.14001}}.

If in this situation $\Aut^\circ(X)=T$ 
and  $H^2(X,\T_X)=0$ holds, then the manifold $\X_t$ admits a cscK metric in the class $c(\L_t)$ if and only if $t\in \B$ is a polystable point of the $T$-action on $H^1(X, \T_X)$.
\end{theorem}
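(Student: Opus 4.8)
The plan is to obtain this statement by assembling the deformation‑theoretic moment‑map picture of \cite{1201.4137}, splitting the work into three parts: lifting the polarization, fixing the structure of the base together with its $T$‑action, and reducing the cscK equation on the nearby fibres to finite‑dimensional geometric invariant theory. First I would treat the polarization. Extending $L$ across $\X \to \B$ amounts to lifting $c_1(L)$ along the family, and the obstruction to deforming the line bundle in an infinitesimal direction $v \in H^1(X,\T_X)$ is the contraction of the Hodge class $c_1(L) \in H^1(X,\Omega^1_X)$ with the Kodaira--Spencer class, which takes values in $H^2(X,\CO)$ via the pairing $\T_X \otimes \Omega^1_X \to \CO$. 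Since $H^2(X,\CO)=0$ all these obstructions vanish, so after shrinking $\B$ the bundle $L$ lifts to $\L$ on $\X$; and because ampleness is an open condition, $c(\L_t)$ remains an ample (K\"ahler) class on $\X_t$ for $t$ near $0$. This is exactly the content of the footnote reference.

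Next I would pin down the base and the group action. Since $H^2(X,\T_X)=0$ the versal deformation is unobstructed, so the Kuranishi space is smooth and I may take $\B$ to be an open neighbourhood of $0$ in the vector space $H^1(X,\T_X)$, with $\X \to \B$ the semi‑universal family. The automorphism group $\Aut(X)$ acts on the Kuranishi space compatibly with its action on $X$; as $X$ is simply connected and $\Aut^\circ(X)=T$, the connected reductive group governing the deformation is the torus $T=(\CC^*)^m$, acting on $H^1(X,\T_X)$ through the natural \emph{linear} representation on cohomology. It is with respect to this linear representation that polystability of the parameter $t$ is to be read.

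The heart of the argument, and the main obstacle, is the reduction of the infinite‑dimensional cscK problem on the nearby fibres to this linear picture. Following \cite{1201.4137} I would fix the cscK metric $\omega$ on $X=X_0$ and view the nearby fibres as $\omega$‑compatible integrable deformations of the complex structure; the scalar curvature then appears as a moment map $\Phi$ for the action of the Hamiltonian symplectomorphism group, whose zeros are precisely the cscK metrics (the Fujiki--Donaldson picture). The hypothesis $\Aut^\circ(X)=T$ forces the holomorphic vector fields to be exactly $\mathrm{Lie}(T)$, so the cokernel of the linearised operator equals the Lie algebra of the maximal compact $T_c \subset T$ and nothing more. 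An implicit‑function/Kuranishi reduction then identifies, for $t$ in a neighbourhood of $0$, the solvability of the cscK equation on $(\X_t,\L_t)$ with the vanishing — up to the $T$‑action — of a finite‑dimensional moment map on $H^1(X,\T_X)$ for $T_c$, whose leading term is the Kempf--Ness moment map of the linear $T$‑representation. The delicate points are to run this reduction uniformly on a full neighbourhood of $0$ and to verify that the reduced moment map agrees, to the order needed, with the Kempf--Ness one; here both $H^2(X,\T_X)=0$ (smoothness of $\B$) and $\Aut^\circ(X)=T$ (control of the obstruction space) are indispensable.

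Finally I would invoke the finite‑dimensional Kempf--Ness theorem: for the linear action of the reductive torus $T$ on the vector space $H^1(X,\T_X)$, the orbit $T \cdot t$ meets the zero locus of the moment map if and only if $t$ is polystable. Combining this with the reduction of the previous step yields that $\X_t$ carries a cscK metric in the class $c(\L_t)$ exactly when $t$ is a polystable point of the $T$‑action on $H^1(X,\T_X)$, which is the assertion.
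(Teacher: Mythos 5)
Your proposal is correct and takes essentially the same route as the paper: the paper's entire ``proof'' consists of invoking Theorem~1.2.3 of \cite{1201.4137} and observing that its proof uses only simple connectedness and $H^2(X,\CO_X)=0$ rather than toricness, together with the remark that $\Aut^\circ(X)=T$ guarantees the complexification of the Hamiltonian isometry group is an algebraic torus. Your sketch simply unpacks the internal mechanism of that cited proof (lifting $L$ via $H^2(X,\CO_X)=0$, smoothness of the Kuranishi base via $H^2(X,\T_X)=0$, and the moment-map/Kempf--Ness reduction), while deferring the delicate uniform reduction to \cite{1201.4137} exactly as the paper does.
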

  This corresponds to Theorem~1.2.3. in \cite{1201.4137}. There it is stated for the toric case, but the proof uses only the facts that toric varieties are simply connected and that $H^2(X,\CO_X)=0$ holds for every toric variety. Note, that the original condition of Theorem~1.2.3 is that the complexification of the group of Hamiltonian isometries $H^\CC$ is an algebraic torus. As it is pointed out later on in their paper, in our situation $H$ is a maximal compact subgroup of $\Aut^\circ(X)$. Now, the condition on the Hamiltonian isometries follows from ours: $\Aut^\circ(X)=T$.

\begin{proof}[Proof of Proposition~\ref{sec:prop-deformations-hypersurface}]
  By Kodaira vanishing we have conditions $H^2(X,\CO_X)=0$ and $H^2(X,\T_X)=0$ fulfilled. Moreover, since $X$ is Fano it is also simply connected. Since a K\"ahler-Einstein metric is a cscK metric in the anti-canonical class we may apply Theorem~\ref{sec:deform-csck} and the claim follows.
\end{proof}

\begin{remark}
  This is a similar situation as for the Mukai-Umemura threefold, where one finds a symmetric element (here admitting an $\Sl_2$-action) in the family $V_{22}$ of Fano threefolds which is K\"ahler-Einstein. In the neighborhood of this threefold the $\Sl_2$-symmetry is lost and one finds nearby-elements not being K\"ahler-Einstein. See \cite{0892.53027} and \cite{1161.53066} for a detailed discussion of this situation.
\end{remark}

\subsection{Torus invariant blowup of the quadric threefold}
Consider th quadric $Q^3=V(u_0^2+u_1v_1+u_2v_2) \subset \PP^4$. We have an action of the 2-torus given by the weights
\[
\begin{array}{rrrrrrl}
  &u_0&u_1&u_2&v_1&v_2&
\vspace{2mm}\\
 \ldelim({2}{0.5ex}&0&1&0&-1&0&\rdelim){2}{0.5ex}\\
  &0& 0&1&0&-1&
\end{array}
\]
We get the rational quotient map
\[\pi:Q \dashrightarrow \PP^1, \quad (u_0:u_1:u_2:v_1:v_2) \mapsto (u_1v_1:u_2v_2).\]
There is an equivariant involution $\sigma:Q \rightarrow Q$ induced by the change of variables $u_i \leftrightharpoons v_i$ for $i=1,2$. By looking at the weights of $u_i$ and $v_i$ we see, that this corresponds to the involution $w \leftrightharpoons -w$ on $M$. Hence, $Q$ is symmetric. 

Let us study the invariant prime divisors on $Q$. There are no horizontal prime divisors. Over $0$ and $\infty$, respectively, we find pairs of divisors cut out by $[u_i=0]$ and $[v_i=0]$ and having order $1$. Moreover, we there is also a vertical prime divisor of order $2$ over the point $-1$. Over every other point $(a:b)  \in \PP^1$ there is exactly one vertical prime divisor, having order $1$. Quadric hypersurfaces are K\"ahler-Einstein. Nevertheless, we cannot prove it by using Theorem~\ref{sec:thm-cplx-1}. We are not interested in $Q$ itself, but in the blowup of $Q$ in the two conics $C_1$ an $C_2$ cut out by $[u_i=v_i=0]$ for $i=1,2$. These conics are torus invariant and invariant by the involution. Hence, the resulting variety is again a symmetric $T$-variety. Moreover the quotient map is simply the composition of $\pi$ with the blowup morphism (there is no choice, the quotient has to be $\PP^1$ and the rational map has to coincide with $\pi$ outside the exceptional divisors). It follows that the exceptional prime divisors are vertical prime divisors over $0$ and $\infty$, respectively. Both have order $2$, since locally we are blowing up $\A^3 = \spec \CC[x,y,z]$ in the ideal $(y,z)$. Here, we may assume that $x=\nicefrac{u_0}{v_2}$, $y=\nicefrac{u_1}{v_2}$ and $z=\nicefrac{v_1}{v_2}$ with weights 
\[\small
\begin{array}{rrrrl}
  &x&y&z&
\vspace{2mm}\\
 \ldelim({2}{0.5ex}&0&1&0&\rdelim){2}{0.5ex}\\
  &1& 1& 2&
\end{array}.
\]
Locally the blowup is given as $\{(x,y,z\;;\,u:v)  \in \A^3 \times \PP^1 \mid yu-zv=0 \}$ and the extended torus action by the additional weights $\deg(u)={-1 \choose -1}$ and $\deg(v)={0 \choose -2}$. Now, the stabilizer at a generic point $(x,0,0\;;\,u:v)$ is $\{(1,1),(-1,-1)\} \subset T$.

By Theorem~\ref{sec:thm-cplx-1} we obtain the following
\begin{proposition}
  $\Bl_{C_1,C_2} Q$ is K\"ahler-Einstein.
\end{proposition}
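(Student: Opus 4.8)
The plan is to show that $X = \Bl_{C_1,C_2} Q$ satisfies all the hypotheses of Theorem~\ref{sec:thm-cplx-1} and then to conclude by its first condition. Since $Q$ is smooth and the two conics are smooth and disjoint---their simultaneous vanishing $u_1=v_1=u_2=v_2=0$ forces $u_0=0$ on $Q$ as well, so $C_1 \cap C_2 = \emptyset$---the blowup $X$ is smooth, hence log terminal. As observed above, both conics are invariant under the torus and under the involution $\sigma$, so the $T$-action and the symmetry lift to $X$; the quotient is again $\PP^1$, with quotient map $\pi$ composed with the blowdown, so $X$ is a symmetric $T$-variety of complexity one. The one hypothesis that still genuinely needs checking is that $X$ is Fano: writing $\varphi:X \to Q$ for the blowdown and $E_1,E_2$ for its exceptional divisors, one has $-K_X = \varphi^*(-K_Q) - E_1 - E_2 = \varphi^*(3H) - E_1 - E_2$, and I would verify ampleness of this class on the relevant curve classes (the fibers of $E_i \to C_i$ and strict transforms of lines and conics of $Q$ meeting the centers), or else appeal to the Mori--Mukai classification of Fano threefolds obtained from $Q^3$.

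The heart of the argument is to count the non-reduced fibers of the quotient map, i.e.\ the points $y \in \PP^1$ with multiplicity $m_y > 1$. The analysis of $Q$ already produced a vertical prime divisor of order $2$ over $-1$, and since the centers $C_1,C_2$ lie over the other two special points this fiber is untouched by the blowup, so $m_{-1}=2$ on $X$. Each conic is contained in a single fiber---$C_1$ maps under $\pi$ to $(0:u_2v_2)$ and hence to one point, and likewise for $C_2$---so the exceptional divisors $E_1,E_2$ are vertical prime divisors over the two points $0$ and $\infty$. The local chart computation carried out above shows that a generic point of such an exceptional divisor has stabilizer $\{(1,1),(-1,-1)\} \subset T$, so each $E_i$ has order $2$ and therefore $m_0 = m_\infty = 2$.

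Thus $X$ has exactly three non-reduced fibers, over $0$, $\infty$ and $-1$, so condition (i) of Theorem~\ref{sec:thm-cplx-1} is fulfilled and $X = \Bl_{C_1,C_2} Q$ is K\"ahler-Einstein. The only step that is not immediate bookkeeping from the structure already set up for $Q$ and its blowup is the verification that $X$ is Fano; once that is in hand the conclusion follows at once.
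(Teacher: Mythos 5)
Your proof is correct and takes essentially the same route as the paper: the paper likewise invokes condition (i) of Theorem~\ref{sec:thm-cplx-1}, using the two order-$2$ exceptional divisors over $0$ and $\infty$ (via the local chart computation) together with the pre-existing order-$2$ vertical divisor over $-1$ to produce three non-reduced fibers. Your explicit checks of smoothness of the blowup and of the Fano property ($-K_X=\varphi^*(3H)-E_1-E_2$) are minor additions that the paper leaves implicit.
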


Again we are interested in the deformation theory of the manifold. The semi-universal deformation of $X=\Bl_{C_1,C_2} Q^3$ is given by blowing up the family 
\[\X=V(u_0^2 + u_1v_1 + u_2v_2 + \alpha \cdot u_1v_2 + \beta \cdot u_2v_1 + \gamma \cdot u_1u_2 + \delta \cdot v_1v_2
)\]
in the subvarieties cut out by $[v_i=u_i=0]$, $i=1,2$. The 2-torus acts on the base by the following weights
\[
\begin{array}{rrrrrl}
  &\alpha&\beta&\gamma&\delta&
\vspace{2mm}\\
 \ldelim({2}{0.5ex}&-1&1&-1&1& \rdelim){2}{0.5ex}\\
  &1&-1&-1&1&
\end{array}
\]
The polystable points are exactly those points $(\alpha,\beta,\gamma,\delta)$, such that $\alpha \beta \neq 0$ or $\gamma \delta \neq 0$.  As in the previous section we can apply Theorem~\ref{sec:deform-csck} to obtain deformations with K\"ahler-Einstein metric as well as those without such metrics.

\begin{remark}
  Both constructions, that of blowups of quadrics and that of hypersurfaces of bidegree $(1,2)$ generalize well to higher dimensions. Here, the quotients to consider are projective spaces of higher dimensions. 
\end{remark}

\bibliographystyle{halpha}
\bibliography{../bib/all}
\end{document}